\documentclass[reqno]{amsart}
\usepackage{amsfonts,amsmath,amssymb,amsrefs,dsfont}
\usepackage{color}
\usepackage[utf8]{inputenc}  
\usepackage[T1]{fontenc}  
\usepackage{graphicx} 
\numberwithin{equation}{section}

\usepackage[colorinlistoftodos]{todonotes}

\newtheorem{thm}{Theorem}[section]
\newtheorem*{thm*}{Theorem}
\newtheorem{lem}{Lemma}[section]

\newtheorem*{cor*}{Corollary}

\newtheorem{Step}{Step}[section]

\newcommand{\R}{\mathbb{R}}
\newcommand{\bra}[1]{\left({#1}\right)}
\newcommand{\ph}{\varphi}
\newcommand{\eps}{\varepsilon}

\begin{document}
\title[Nonzero limiting profile for a critical Moser-Trudinger equation]{Glueing a peak to a non-zero limiting profile for a critical Moser-Trudinger equation}

\author{Gabriele Mancini}
\address[Gabriele Mancini]{Universit\`a degli Studi di Padova, Dipartimento di Matematica Tullio Levi-Civita, Via Trieste, 63, 35121 Padova}
\email{gabriele.mancini@math.unipd.it}

\author{Pierre-Damien Thizy}
\address[Pierre-Damien Thizy]{Universit\'e Claude Bernard Lyon 1, CNRS UMR 5208, Institut Camille Jordan, 43 blvd. du 11 novembre 1918, F-69622 Villeurbanne cedex, France}
\email{pierre-damien.thizy@univ-lyon1.fr}
\subjclass{35B33, 35B44, 35J15, 35J61}
\date{December 2017}

\begin{abstract}
Druet \cite{DruetDuke} proved that if $(f_\gamma)_\gamma$ is a sequence of Moser-Trudinger type nonlinearities with {critical growth}, and if $(u_\gamma)_\gamma$ solves \eqref{DruetDukeEq} and converges weakly in $H^1_0$ to some $u_\infty$, then the Dirichlet energy is quantified, namely there exists an integer $N\ge 0$ such that the energy of $u_\gamma$ converges to $4\pi N$ plus the Dirichlet energy of $u_\infty$. As a crucial step to get the general existence results of \cite{DruThiII}, it was more recently proved in \cite{DruThiI} that, for a specific class of nonlinearities (see \eqref{TypicalCase}),  the loss of compactness (i.e. $N>0$) implies that $u_\infty\equiv 0$. In contrast,  we prove here that there exist sequences $(f_\gamma)_\gamma$ of Moser-Trudinger type nonlinearities which admit a noncompact sequence  $(u_\gamma)_\gamma$ of solutions of \eqref{DruetDukeEq} having a nontrivial weak limit. 
\end{abstract}

\maketitle

\section*{Introduction}
Let $\Omega$ be a smooth bounded domain in $\mathbb{R}^2$. In this work we are interested in investigating the behavior of sequences of solutions of nonlinear critical elliptic problems of the form  \begin{equation}\label{DruetDukeEq}
\begin{cases}
&\Delta u  =f_\gamma(x,u)\,,~~ u>0\text{ in }\Omega\,,\\
&u =0\text{ on }\partial\Omega\,,
\end{cases}
\end{equation} 
 where $\Delta=-\text{div}(\nabla\cdot)$ and $(f_\gamma)_\gamma$  is a sequence of Moser-Trudinger type nonlinearities. A typical but very specific example of such a sequence is given by 
\begin{equation}\label{TypicalCase}
\begin{cases}
&f_\gamma(x,u)=\beta_\gamma h_\gamma(x) u \exp(u^2)\,,\\
&\lim_{\gamma\to +\infty}\beta_\gamma=\beta_\infty\ge 0\,,\\
&\lim_{\gamma\to +\infty}h_\gamma=h_\infty \text{ in }C^2(\bar{\Omega})\,,\\
&h_\infty>0 \text{ in }\bar{\Omega}\,,
\end{cases}
\end{equation}
where the $\beta_\gamma$'s are positive numbers and the $h_\gamma$'s are positive functions in $C^2(\bar{\Omega})$. Recall that nonlinearities as in \eqref{TypicalCase} arise when looking for critical points of the Moser-Trudinger functional {$$F_\gamma(u)=\int_{\Omega} h_\gamma(x) \exp(u^2) dx\,,
$$ }
under the Dirichlet energy constraint $\int_{\Omega} |\nabla u|^2 dx=\alpha_\gamma$, where $(\alpha_\gamma)_\gamma$ is any given sequence of positive real numbers.

 In \cite{DruetDuke} Druet  obtained a general quantification result for solutions of \eqref{DruetDukeEq} for a large class of nonlinearities, including the ones in \eqref{TypicalCase}. More precisely, he proved  that, if the $f_\gamma$'s  have \emph{uniformly critical growth} (see \cite[Definition 1]{DruetDuke}), then for any sequence $(u_\gamma)_\gamma$ satisfying \eqref{DruetDukeEq} and converging weakly in $H^1_0$ to some $u_\infty$, the Dirichlet energy is quantified (see also \cite[Section 2]{DruThiI}). Namely, up to a subsequence, there exists an integer $N\ge 0$ such that 
\begin{equation}\label{QuantifDru}
\lim_{\gamma\to +\infty}\int_{\Omega} |\nabla u_\gamma|^2 dy=4\pi N+\int_\Omega |\nabla u_\infty|^2 dy\,.
\end{equation}
Observe that such a sequence $(u_\gamma)_\gamma$ is compact in $H^1_0$, if and only if it is uniformly bounded, and if and only if $N=0$ in \eqref{QuantifDru}. As a consequence of the \textit{very strong} interaction generated by an exponentially critical nonlinearity, it is not clear in general whether it is possible to have loss of compactness, i.e. $N\ge 1$ in \eqref{QuantifDru}, together with $u_\infty\not \equiv 0$. For instance, for the typical nonlinearities $f_\gamma$ given by \eqref{TypicalCase}, in order to understand globally the bifurcation diagram of \eqref{DruetDukeEq} and the associated questions of existence of solutions (see \cite{DruThiII}), Druet-Thizy \cite{DruThiI} pushed further the analysis  and proved that, for such a \textit{noncompact} sequence $(u_\gamma)_\gamma$, there necessarily holds that $u_\infty\equiv 0$, so that the limit of the Dirichlet energy  in \eqref{QuantifDru} has to be $4\pi$ times an integer $N>0$. In contrast, the purpose of this paper is to show that for different families of exponentially critical nonlinearities it is possible to construct bubbling sequences of solutions with non-trivial weak limit in $H^1_0$.  

Let $\Omega$ be the unit disk of $\mathbb R^2$ centered at $0$ and let $0<\lambda_1<\lambda_2<...$ be the sequence of the simple radial eigenvalues of $\Delta$ in $\Omega$, with zero Dirichlet boundary condition.  Let $v_1,v_2...$ be the associated radial eigenfunctions uniquely determined by $\int_{\Omega} v_k^2 dy=1$ and $v_k(0)>0$ for all $k$. Our first result shows that  there exists a sequence $(f_\gamma)_\gamma$ with Moser-Trudinger type growth for which \eqref{DruetDukeEq} admits a  sequence $(u_\gamma)_\gamma$ of positive radial solutions  converging weakly to a multiple of $v_1$ and with Dirichlet energy approaching  any fixed value in $(4\pi,+\infty)$. 
\begin{thm}[Positive case]\label{MainThm}
Let $l>0$ be given and let $\Omega\subset\mathbb{R}^2$ be the unit disk centered at $0$. Let $\bar{\lambda}_{\gamma}$ be given by
\begin{equation}\label{DefLambdaGamma}
\bar{\lambda}_{\gamma}=\lambda_1-\varepsilon_\gamma\,,
\end{equation}
where $\varepsilon_\gamma=\frac{4\pi v_1(0) }{\gamma}\sqrt{\frac{\lambda_1}{l}}$. Then there exists $\beta_\gamma>0$ such that the equation
\begin{equation}\label{EqSingGamma}
\begin{cases}
&\Delta u=\bar{\lambda}_{\gamma} u+ \beta_\gamma u \exp(u^2)\,,~~ u>0\text{ in }\Omega\,,\\
&u=0\text{ on }\partial\Omega\,,
\end{cases}
\end{equation}
admits a smooth solution $u_\gamma$ satisfying $u_\gamma(0)=\gamma$, for all $\gamma>0$ sufficiently large for $\bar{\lambda}_\gamma$ to be positive. Moreover, for any sequences $(\beta_\gamma)_\gamma$,  $(u_\gamma)_\gamma$ with the above properties, we have that $\beta_\gamma\to 0$, that 
\begin{equation}\label{WeakConv}
u_\gamma\rightharpoonup u_\infty\text{ weakly in }H^1_0\,,
\end{equation}
where $u_\infty=v_1\sqrt{\frac{l}{\lambda_1}}\not\equiv 0$, and that the quantification 
\begin{equation}\label{QuantifWL}
\int_\Omega |\nabla u_\gamma|^2 dy=4\pi+l+o(1)\,,
\end{equation}
holds true as $\gamma\to +\infty$.
\end{thm}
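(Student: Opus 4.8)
The plan is to reduce the problem to a radial ODE shooting argument and then to run a one-dimensional blow-up analysis in the spirit of \cite{DruetDuke,DruThiI}.

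\emph{Reduction and existence of $\beta_\gamma$.} Since $\Omega$ is a disk, I would look for positive \emph{radial} solutions, so that \eqref{EqSingGamma} becomes $-\left(ru'\right)'=r\bigl(\bar\lambda_\gamma u+\beta_\gamma u\exp(u^2)\bigr)$ on $(0,1)$, with $u'(0)=0$, $u(1)=0$. For fixed large $\gamma$ (so that $\bar\lambda_\gamma>0$) and $\beta>0$, let $u_{\gamma,\beta}$ solve the associated initial value problem with $u(0)=\gamma$, $u'(0)=0$. Since $\left(ru_{\gamma,\beta}'\right)'=-r\bigl(\bar\lambda_\gamma u_{\gamma,\beta}+\beta u_{\gamma,\beta}\exp(u_{\gamma,\beta}^2)\bigr)<0$ while $u_{\gamma,\beta}>0$, the function $r\mapsto ru_{\gamma,\beta}'(r)$ strictly decreases from $0$, so $u_{\gamma,\beta}$ is strictly decreasing until its first zero $Z(\beta)\in(0,+\infty]$, and $u_{\gamma,\beta}>0$ on $[0,1)$ precisely when $Z(\beta)=1$. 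As $\beta\to0^+$ one has $u_{\gamma,\beta}\to\gamma J_0(\sqrt{\bar\lambda_\gamma}\,\cdot)$ in $C^1_{\mathrm{loc}}$, whose first zero is $\sqrt{\lambda_1/\bar\lambda_\gamma}>1$ (as $\lambda_1$ is the square of the first zero of $J_0$ and $\bar\lambda_\gamma<\lambda_1$ by \eqref{DefLambdaGamma}), so $Z(\beta)>1$ for $\beta$ small; whereas for $\beta$ large the bound $-\left(ru_{\gamma,\beta}'\right)'\ge r\beta(\gamma/2)\exp(\gamma^2/4)$, valid while $u_{\gamma,\beta}\ge\gamma/2$, forces $Z(\beta)<1$. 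Hence $\beta_\gamma:=\sup\{\beta>0:\ u_{\gamma,\beta}>0\text{ on }[0,1]\}$ is finite, and by continuous dependence on parameters $u_{\gamma,\beta_\gamma}\ge0$ on $[0,1]$ has a zero there; being positive and decreasing up to its first zero, that zero must be at $r=1$, i.e.\ $Z(\beta_\gamma)=1$. This produces $\beta_\gamma>0$ and $u_\gamma:=u_{\gamma,\beta_\gamma}$, smooth by elliptic regularity, with $u_\gamma(0)=\gamma$.

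\emph{Blow-up analysis (the main step).} Now let $(\beta_\gamma,u_\gamma)$ be any sequence as in the statement; each $u_\gamma$ is radial, positive, strictly decreasing, with $u_\gamma(0)=\gamma\to+\infty$. I would rescale: with $\delta_\gamma>0$ defined by $\gamma^2\beta_\gamma\delta_\gamma^2\exp(\gamma^2)=4$, the functions $s\mapsto\gamma\bigl(u_\gamma(\delta_\gamma s)-\gamma\bigr)$ satisfy a Liouville-type equation and should converge in $C^1_{\mathrm{loc}}(\R^2)$ to the standard bubble $-\log(1+|s|^2)$, of total mass $\int_{\R^2}\tfrac{4\,ds}{(1+|s|^2)^2}=4\pi$. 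The delicate part -- and the step I expect to be the main obstacle -- is to upgrade this to a global description: that at the blow-up scale the linear term $\bar\lambda_\gamma u_\gamma$ is negligible (so the pointwise estimates of \cite{DruThiI} apply near $0$ essentially verbatim), that the Liouville logarithmic tail already brings $u_\gamma$ from $\gamma$ down to $O(1)$ at the macroscopic scale (leaving no room for a second concentration, so the blow-up is a \emph{single} bubble and $\beta_\gamma\to0$), that $u_\gamma$ is uniformly bounded on $\{|x|\ge\rho\}$ for each $\rho>0$, and that $\|u_\gamma\|_{H^1_0}$ is bounded with, via the neck analysis,
\[
\beta_\gamma\int_\Omega u_\gamma^2\exp(u_\gamma^2)\,dy\longrightarrow4\pi,\qquad\beta_\gamma\int_\Omega u_\gamma\exp(u_\gamma^2)\,dy=\frac{4\pi}{\gamma}\bigl(1+o(1)\bigr).
\]
In particular $u_\gamma\rightharpoonup u_\infty$ weakly in $H^1_0$ along a subsequence, and (the nonlinearity being readily checked to be of uniformly critical growth in the sense of \cite{DruetDuke}) the quantification \eqref{QuantifDru} applies.

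\emph{Identification of $u_\infty$ and conclusion.} On $\{|x|\ge\rho\}$, $u_\gamma$ is bounded and $\beta_\gamma u_\gamma\exp(u_\gamma^2)\to0$ uniformly while $\bar\lambda_\gamma\to\lambda_1$; since the bubble contributes only $O(1/\gamma)$ to $\int_\Omega\Delta u_\gamma\,dy$ (no Dirac mass at the origin), $u_\infty$ is a nonnegative radial $H^1_0$ solution of $\Delta u_\infty=\lambda_1 u_\infty$ in $\Omega$, hence $u_\infty=c\,v_1$ for some $c\ge0$ by simplicity of $\lambda_1$. To compute $c$, I would test \eqref{EqSingGamma} against $v_1$ and integrate by parts using $\Delta v_1=\lambda_1 v_1$:
\[
\varepsilon_\gamma\int_\Omega v_1 u_\gamma\,dy=(\lambda_1-\bar\lambda_\gamma)\int_\Omega v_1 u_\gamma\,dy=\beta_\gamma\int_\Omega v_1 u_\gamma\exp(u_\gamma^2)\,dy.
\]
By compact Sobolev embedding $\int_\Omega v_1 u_\gamma\,dy\to c\int_\Omega v_1^2\,dy=c$, while the right-hand side concentrates at $0$, where $v_1\to v_1(0)$ and $u_\gamma\sim\gamma$, so it equals $\tfrac{v_1(0)+o(1)}{\gamma}\,\beta_\gamma\int_\Omega u_\gamma^2\exp(u_\gamma^2)\,dy+O(\beta_\gamma)=\tfrac{4\pi v_1(0)}{\gamma}\bigl(1+o(1)\bigr)$ by the previous step. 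Recalling $\varepsilon_\gamma=\tfrac{4\pi v_1(0)}{\gamma}\sqrt{\lambda_1/l}$ and letting $\gamma\to+\infty$ forces $\sqrt{\lambda_1/l}\,c=1$, i.e.\ $c=\sqrt{l/\lambda_1}$; thus $u_\infty=v_1\sqrt{l/\lambda_1}\not\equiv0$, which (the limit being unique) gives \eqref{WeakConv} for the full sequence. Finally $\int_\Omega|\nabla u_\infty|^2\,dy=\tfrac{l}{\lambda_1}\int_\Omega|\nabla v_1|^2\,dy=l$, so \eqref{QuantifWL} follows from \eqref{QuantifDru} with $N=1$ -- equivalently, directly from $\int_\Omega|\nabla u_\gamma|^2\,dy=\bar\lambda_\gamma\int_\Omega u_\gamma^2\,dy+\beta_\gamma\int_\Omega u_\gamma^2\exp(u_\gamma^2)\,dy\to l+4\pi$. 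The crux of the whole argument is the sharp blow-up analysis of the second paragraph -- in particular excluding a tower of bubbles and controlling the neck -- everything else being comparatively soft.
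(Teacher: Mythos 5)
Your existence step is essentially the paper's Lemma~\ref{ExistRes} (a shooting argument tracking the first zero $R_\beta$ as $\beta$ varies, with $R_0>1$ and $R_\beta\to0$, and continuity giving a $\beta$ with $R_\beta=1$); that part is fine. Your final step (testing against $v_1$, compact embedding, and the Pohozaev-type identity for the Dirichlet energy) is also the same as the paper's Section~1 conclusion. The problem is that everything in between is precisely what the paper has to work for, and your shortcut through it is not available.

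The concrete error is the parenthetical claim that the nonlinearity $f_\gamma(x,u)=\bar\lambda_\gamma u+\beta_\gamma u\exp(u^2)$ is ``readily checked to be of uniformly critical growth in the sense of \cite{DruetDuke}.'' The paper states explicitly, right after Theorem~\ref{Thm3}, that these nonlinearities do \emph{not} satisfy Druet's uniformly-critical-growth hypothesis once $\beta_\gamma\to0$ and $\bar\lambda_\gamma\to\lambda_1^-$ (the two ``uniformity'' requirements degenerate simultaneously). So you cannot import Druet's quantification \eqref{QuantifDru} or the a priori $H^1_0$ bound from it, and neither can you quote the pointwise neck estimates of \cite{DruThiI} ``essentially verbatim,'' since those are proved under the hypothesis \eqref{TypicalCase} with $h_\infty>0$ bounded away from zero. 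Your proposed derivation of $\beta_\gamma\int u_\gamma^2\exp(u_\gamma^2)\to4\pi$, of $\beta_\gamma\int u_\gamma\exp(u_\gamma^2)=\frac{4\pi}{\gamma}(1+o(1))$, of the $H^1_0$-boundedness, and of the absence of a bubble tower are therefore all unjustified; and without the $H^1_0$ bound even ``$u_\gamma\rightharpoonup u_\infty$ along a subsequence'' is not granted.

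This gap is not a matter of filling in routine details. The paper's Steps~\ref{St1}--\ref{St2} are two nontrivial contradiction arguments establishing $\mu_\gamma\gamma\to0$ and $|\log\beta_\gamma|=o(\gamma^2)$, and both use the \emph{specific} formula $\varepsilon_\gamma=\frac{4\pi v_1(0)}{\gamma}\sqrt{\lambda_1/l}$ in an essential way (compare \eqref{Contrad3}--\eqref{Contrad4} and \eqref{Estim5}--\eqref{Estim4} against the definition of $\varepsilon_\gamma$). The subsequent steps then build the precise pointwise expansion up to $\rho_\gamma$ and the outer profile $\frac{1}{\gamma}\log\frac{1}{r^2}+\|u_\gamma\|_2 v_1(r)$, from which \eqref{ManciniThi2}--\eqref{NewToProve2} give exactly the two integral asymptotics you write down as if they were known. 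The conceptual point you are missing is that this blow-up analysis cannot be outsourced to the existing literature; it has to be redone from scratch in the radial setting, and the choice of $\varepsilon_\gamma$ in \eqref{DefLambdaGamma} is what makes the two contradiction arguments close. A proof that treats $\varepsilon_\gamma$ as a generic $o(1)$ quantity, as yours does until the very last step, cannot rule out either $\beta_\gamma\exp(\gamma^2)=O(1)$ (in which case there is no concentration at all) or $\log\frac1{\beta_\gamma}\gtrsim\gamma^2$ (in which case the $L^2$-norm of $u_\gamma$ blows up like $\gamma$ and the weak limit would be $0$).
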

Observe that \eqref{EqSingGamma} can be seen as a particular case of \eqref{DruetDukeEq} with 
\begin{equation}\label{EigenNonlin}
f_\gamma(x,u)=\bar \lambda_{\gamma} u + \beta_\gamma u \exp(u^2),
\end{equation}
and it arises when looking at the Euler-Lagrange equation of the Adimurthi-Druet inequality \cite{AdimurthiDruet}. We also refer to \cite{ThiMan}, where this Euler-Lagrange equation was studied in this tricky regime $\bar{\lambda}_{\gamma}\to \lambda_1$, but only in the minimal energy case, where $l$ equals $0$ in \eqref{QuantifWL}. When considering the typical case \eqref{TypicalCase}, i.e. the Euler-Lagrange equation of the standard Moser-Trudinger inequality, existence results have been obtained using radial analysis \cites{CarlesonChang,MalchMartJEMS} (see also \cite{MartMan}), variational \cites{Flucher,StruweCrit}, perturbative \cite{DelPNewSol} or topological methods \cites{DruThiII,LammRobertStruwe,StruwePlanar}. According to the previous discussion, contrary to those built in Theorem \ref{MainThm}, the blow-up solutions obtained in  these results always have a zero weak limit in $H^1_0$. 

As a by-product of Theorem \ref{MainThm} and its proof below, we also obtain the following result.
\begin{thm}[Nodal case]\label{MainThmNod}
Let $l>0$ be fixed and $\Omega\subset\mathbb{R}^2$ be the unit disk centered at $0$. Let $k\ge 2$ be a fixed integer. Then there exist positive real numbers $\bar{\gamma}=\bar{\gamma}(k,l)$, $\bar{\lambda}_{\gamma}=\bar{\lambda}_{\gamma}(k,l)$ and $\beta_{\gamma}=\beta_{\gamma}(k,l)$ such that  \begin{equation}\label{relEpsGamma}
\lambda_k-\bar{\lambda}_{\gamma}=(1+o(1))\frac{4\pi v_k(0)}{\gamma}\sqrt{\frac{\lambda_k}{l}},
\end{equation} 
as $\gamma\to +\infty$, and such that the equation 
\begin{equation}\label{EqSingGammaNod}
\begin{cases}
&\Delta u=\bar{\lambda}_{\gamma} u+ \beta_{\gamma} u \exp(u^2)\text{ in }\Omega\,,\\
&u=0\text{ on }\partial\Omega\,,\\
\end{cases}
\end{equation}
admits a smooth solution $u_{\gamma}=u_{\gamma}(k,l)$ satisfying $u_{\gamma}(0)=\gamma$, for all $\gamma>\bar{\gamma}$. Moreover, we have that $\beta_{\gamma}\to 0$, that \eqref{WeakConv} holds true for $$u_\infty=v_k\sqrt{\frac{l}{\lambda_k}}\not\equiv 0\,,$$ and that the quantification \eqref{QuantifWL} holds true, as $\gamma\to +\infty$.
\end{thm}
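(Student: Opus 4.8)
The plan is to mimic the construction behind Theorem \ref{MainThm}, replacing the first radial eigenfunction $v_1$ by $v_k$, and to track the only two places where the argument genuinely uses $k=1$. We look for a radial solution of \eqref{EqSingGammaNod} of the form $u_\gamma=W_\gamma+t_\gamma v_k+\phi_\gamma$, where $W_\gamma$ is a standard radial Moser--Trudinger bubble of height $\gamma$ concentrating at the origin (the ``peak''), $t_\gamma\to\sqrt{l/\lambda_k}$ carries the prescribed limiting profile, and $\phi_\gamma$ is a lower-order correction obtained by a fixed-point/Lyapunov--Schmidt scheme in a suitable weighted space. The parameters $\beta_\gamma$ and $\bar\lambda_\gamma$ are then adjusted (as in Theorem \ref{MainThm}, $\bar\lambda_\gamma$ is pushed slightly below $\lambda_k$ by an amount $\varepsilon_\gamma\sim\frac{4\pi v_k(0)}{\gamma}\sqrt{\lambda_k/l}$) so that the projection of the error onto the one-dimensional resonant subspace $\mathrm{span}(v_k)$ vanishes; this orthogonality condition is exactly what produces the relation \eqref{relEpsGamma}. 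Since $u_\gamma(0)=\gamma\to+\infty$ forces, by Druet's quantification \eqref{QuantifDru}, an energy jump that is a positive multiple of $4\pi$, and since the interaction computation gives precisely one bubble plus the profile, we get $\beta_\gamma\to 0$, the weak convergence \eqref{WeakConv} to $u_\infty=v_k\sqrt{l/\lambda_k}$, and the quantification \eqref{QuantifWL}.

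Concretely I would proceed in the following steps. First, set up the ansatz and compute the error $\Delta(W_\gamma+t v_k)-\bar\lambda_\gamma(W_\gamma+t v_k)-\beta_\gamma(W_\gamma+t v_k)\exp((W_\gamma+t v_k)^2)$ in the chosen norm, isolating the resonant component along $v_k$ and showing the remainder is small; the bubble $W_\gamma$ is essentially supported near $0$ at scale $\mu_\gamma$ with $\mu_\gamma^{2}e^{\gamma^2}\sim$ const, so that $\beta_\gamma$ is tied to $\mu_\gamma$ and hence to $\gamma$. Second, solve the infinite-dimensional part by a contraction mapping argument for $\phi_\gamma$, using invertibility of $\Delta-\lambda_k$ on the orthogonal complement of $v_k$ among radial functions (the eigenvalues being simple, this complement is spectrally gapped). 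Third, reduce to the one-dimensional equation for $t_\gamma$ and $\varepsilon_\gamma=\lambda_k-\bar\lambda_\gamma$: pairing the equation with $v_k$ and expanding, the dominant balance is $\varepsilon_\gamma t_\gamma\int v_k^2 = \beta_\gamma\int W_\gamma\exp(W_\gamma^2) v_k + o(\cdot)$, and evaluating the bubble integral against the smooth function $v_k$ (which is $\approx v_k(0)$ on the concentration scale) yields $\varepsilon_\gamma\sim\frac{4\pi v_k(0)}{\gamma}\sqrt{\lambda_k/l}$, i.e. \eqref{relEpsGamma}. Fourth, read off $\beta_\gamma\to0$, \eqref{WeakConv} and \eqref{QuantifWL} from the construction, exactly as in Theorem \ref{MainThm}.

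The one substantive difference from the positive case is that $v_k$ changes sign for $k\ge2$, so the candidate $u_\gamma=W_\gamma+t_\gamma v_k+\phi_\gamma$ need not be positive on all of $\Omega$; this is why Theorem \ref{MainThmNod} only asserts $u_\gamma$ is a (nodal) solution of \eqref{EqSingGammaNod} without the constraint $u>0$, and the maximum-principle step used in Theorem \ref{MainThm} to guarantee positivity is simply dropped. Everything else — the bubble estimates, the weighted-space linear theory, the fixed point, the one-dimensional reduction — goes through verbatim with $v_1$ replaced by $v_k$ and $\lambda_1$ by $\lambda_k$, because it only uses simplicity of the radial eigenvalue, smoothness of $v_k$, and $v_k(0)>0$.

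**Main obstacle.** I expect the delicate point to be the linear theory on the complement of the resonant direction: near $\bar\lambda_\gamma\approx\lambda_k$ the operator $\Delta-\bar\lambda_\gamma$ is nearly singular, and one must show that, once the $v_k$-component is projected out, it is uniformly invertible in the weighted norm adapted to the bubble $W_\gamma$ — uniformly in $\gamma$ and with constants that do not deteriorate as $\mu_\gamma\to0$. Controlling the interaction between the slowly varying eigenfunction $v_k$ and the sharply peaked bubble $W_\gamma$ in this norm, so that the fixed-point map is a genuine contraction, is the technical heart of the argument; the sign change of $v_k$ does not affect this part but does slightly complicate bookkeeping of where $u_\gamma$ is positive.
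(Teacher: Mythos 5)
Your proposal takes a genuinely different route from the paper, and as written it is not a complete proof. You set up a Lyapunov--Schmidt gluing: write $u_\gamma=W_\gamma+t_\gamma v_k+\phi_\gamma$ with a concentrating Moser--Trudinger bubble $W_\gamma$ of height $\gamma$, solve for $\phi_\gamma$ by a fixed point on the orthogonal complement of $v_k$, and read off $\varepsilon_\gamma=\lambda_k-\bar\lambda_\gamma$ from the one-dimensional bifurcation equation. The paper does something much cheaper: it reduces Theorem~\ref{MainThmNod} to the already-proved Theorem~\ref{MainThm} by exploiting the scale covariance of $\Delta u=\lambda u+\beta u e^{u^2}$ under $u\mapsto u(\rho\,\cdot)$, $(\lambda,\beta)\mapsto(\rho^2\lambda,\rho^2\beta)$. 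Concretely, the radial solution $\tilde u_\gamma$ provided by Theorem~\ref{MainThm} (applied with the adjusted level $\tilde l=l/\alpha_k$, $\alpha_k=\int_{B_0(r_k)}\bar v_1^2\,dx$) is defined as a solution of the autonomous radial ODE on all of $\mathbb{R}^2$ and converges in $C^1_{loc}(\mathbb{R}^2\setminus\{0\})$ to a multiple of the entire eigenfunction $\bar v_1$, whose zeros are at $r_n=\sqrt{\lambda_n/\lambda_1}$. Hence $\tilde u_\gamma$ has a $k$th zero $r_{k,\gamma}\to r_k$, and $u_\gamma:=\tilde u_\gamma(r_{k,\gamma}\,\cdot)$ solves \eqref{EqSingGammaNod} on the unit disk with $\bar\lambda_\gamma=r_{k,\gamma}^2\tilde\lambda_\gamma\to\lambda_k^-$ and $\beta_\gamma=r_{k,\gamma}^2\tilde\beta_\gamma\to 0$. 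The quantification, the weak limit, and \eqref{relEpsGamma} are then obtained by transporting the estimates of Theorem~\ref{MainThm} through this dilation (conformal invariance of the Dirichlet energy, and pairing \eqref{EqSingGammaNod} against $v_k$). So Theorem~\ref{MainThmNod} costs essentially one page on top of Theorem~\ref{MainThm}, with no new PDE analysis.

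There is also a genuine gap in your scheme, and it is the point you yourself flag as the ``main obstacle'': uniform-in-$\gamma$ invertibility of the linearization near the degenerate operator $\Delta-\lambda_k$ around a concentrating bubble, in a weighted norm that resolves both the microscopic scale $\mu_\gamma\to 0$ and the macroscopic eigenfunction. That is precisely the hard content of entire technical papers (e.g.\ the del Pino--Musso--Ruf reference cited here, in a related regime), and nothing in your sketch establishes it; asserting that the ``bubble estimates, weighted-space linear theory, and fixed point go through verbatim'' is exactly what needs to be proved. A second, smaller issue: with your ansatz $u_\gamma(0)=\gamma+t_\gamma v_k(0)+\phi_\gamma(0)$ is only $\gamma+O(1)$, so enforcing the normalization $u_\gamma(0)=\gamma$ exactly would require one more adjustment of the bubble height; the paper's shooting construction in Lemma~\ref{ExistRes} builds in this normalization from the start.
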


As $v_k$ for $k\ge 2$,  the solutions $u_\gamma$'s in Theorem \ref{MainThmNod} are sign-changing and have exactly $k$ nodal regions in $\Omega$. Theorem \ref{MainThmNod} provides new examples of non-compact sequences of nodal solutions for a Moser-Trudinger critical type equation for which the quantification in \eqref{QuantifWL} holds true. We mention that Grossi and Naimen \cite{GrossiNaim} obtained recently a nice example of a quantized sequence in the sign-changing case. In \cite{GrossiNaim}, the results of  \cites{AdYadNonex,AdYadMult} are used as a starting point and the point of view is completely different from that of Theorem \ref{MainThmNod}. 

While the nonlinearities of the form \eqref{EigenNonlin}  clearly have Moser-Trudinger type growth, it should be pointed out that they do not have \emph{uniformly critical growth} in the sense of the definition of Druet \cite{DruetDuke}, if $\beta_\gamma\to 0$ and $\bar \lambda_\gamma\to \lambda_k^-$, $k\ge 1$ as $\gamma\to +\infty$ as in Theorem  \ref{MainThm} and Theorem \ref{MainThmNod}. However, our techniques can be applied to the study of different kinds of  nolinearities.  For a fixed real parameter $a>0$, let  $g:[0,+\infty)\rightarrow \mathbb R$ be such that 
\begin{equation}\tag{G1}\label{G1}
g(t)= e^{t^2-a t} \quad \text{ for } \quad t\ge c_0>0,
\end{equation}
and 
{\begin{equation}\tag{G2}\label{G2}
g\in C^0([0,+\infty)) \quad  \text{with} \quad g> 0 \quad \text{ in  } [0,+\infty).
\end{equation}}
Then, we get the following result.

\begin{thm}\label{Thm3}
Let $\Omega{\subset \mathbb{R}^2}$ be the unit disk centered at $0$ and let $a>0$ and $g\in C{^0}([0,+\infty))$ be given so that \eqref{G1} and \eqref{G2} hold. For any $\gamma>0$ there exists a unique $\beta_\gamma>0$ such that the equation
\begin{equation}
\begin{cases}\label{LastProblem}
&\Delta u= \beta_\gamma u g(u)\,,~~ u>0\text{ in }\Omega\,,\\
&u=0\text{ on }\partial\Omega\,,
\end{cases}
\end{equation}
admits a (unique) smooth radially symmetric solution $u_\gamma$ satisfying $u_\gamma(0)=\gamma$. Moreover, as $\gamma\to + \infty$ we have $\beta_\gamma\to \beta_\frac{a}{2}>0$, that 
\begin{equation}\label{WLimThm3}
u_\gamma\rightharpoonup u_\frac{a}{2} \text{ weakly in }H^1_0\,,
\end{equation}
and that the following quantification holds true
\begin{equation}\label{QuantifGa}
\int_\Omega |\nabla u_\gamma|^2 dy=4\pi+\int_{\Omega}|\nabla u_\frac{a}{2}|^2 dy.
\end{equation}
\end{thm}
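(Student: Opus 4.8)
The plan is to reduce \eqref{LastProblem} to a scalar ODE shooting problem and then pass to the limit. Since $\Omega$ is the unit disk and we seek radial solutions, writing $r=|x|$ and $u=u(r)$, the equation becomes $-(r u')' = \beta r\, u\, g(u)$ on $(0,1)$ with $u'(0)=0$, $u(1)=0$. For fixed $\gamma>0$, I would first solve the initial value problem $-(r w')'=r\, w\, g(w)$, $w(0)=\gamma$, $w'(0)=0$, \emph{without} the parameter $\beta$; call the solution $w_\gamma$. Because $g>0$ on $[0,+\infty)$ by \eqref{G2} and $g$ has the explicit exponential form \eqref{G1} for large argument, standard ODE theory (together with a Moser--Trudinger type a priori bound on radial solutions) gives that $w_\gamma$ is defined on all of $[0,+\infty)$, is strictly decreasing as long as it is positive, and has a first zero; the scaling $u(r)=w_\gamma(\sqrt{\beta}\,r)$ then solves \eqref{LastProblem} on the unit disk precisely when $\sqrt{\beta}$ equals the first zero $R_\gamma$ of $w_\gamma$, i.e. $\beta_\gamma=R_\gamma^{-2}$. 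This simultaneously yields existence and uniqueness of the pair $(\beta_\gamma,u_\gamma)$ with $u_\gamma(0)=\gamma$: uniqueness follows from uniqueness for the IVP (the nonlinearity $t\mapsto t g(t)$ is locally Lipschitz near any positive value, and near $0$ one argues via the sign and the explicit continuity).

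Next I would analyze the blow-up profile as $\gamma\to+\infty$. The natural rescaling at the origin is $v_\gamma(y)=\gamma\big(u_\gamma(\mu_\gamma y)-\gamma\big)$ for an appropriate scale $\mu_\gamma\to 0$ chosen so that $\beta_\gamma\mu_\gamma^2\gamma^2 e^{\gamma^2-a\gamma}$ is of order one; concretely one sets $\mu_\gamma^{-2}=\beta_\gamma\gamma^2 g(\gamma)=\beta_\gamma\gamma^2 e^{\gamma^2-a\gamma}$. Using \eqref{G1}, for $y$ in compact sets $u_\gamma(\mu_\gamma y)=\gamma+O(\gamma^{-1})$ stays large, so $g(u_\gamma)=e^{u_\gamma^2-a u_\gamma}$ and a Taylor expansion gives $\beta_\gamma u_\gamma g(u_\gamma)\mu_\gamma^2 = e^{v_\gamma(1+o(1))-\frac{a}{2}\cdot\frac{v_\gamma}{\gamma}(1+o(1))+\cdots}$, which converges to the standard Liouville right-hand side $e^{V}$ after checking the cross terms vanish. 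Thus $v_\gamma\to V$ in $C^2_{loc}(\mathbb{R}^2)$ where $V$ solves the Liouville equation $-\Delta V=e^V$ in $\mathbb{R}^2$ with $V(0)=0$, $\nabla V(0)=0$ and finite mass, forcing $\int_{\mathbb{R}^2}e^V=8\pi$ and hence the $4\pi$ in \eqref{QuantifGa} (the factor $2$ between $8\pi$ and the Dirichlet-energy quantum $4\pi$ being the usual one coming from the relation $\Delta u=\beta u g(u)$ and the energy identity). The crucial point where the constant $a/2$ enters is that the \emph{first-order correction} to $u_\gamma$ away from the peak does not vanish: the reason is precisely the subexponential factor $e^{-at}$ in \eqref{G1}, which in the Pohozaev/energy balance behaves like a shift of $\gamma$ by $a/2$, so that the neck and exterior analysis sees an effective asymptotic profile governed by $\beta_{a/2}$ rather than by a vanishing limit.

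Then I would set up the exterior/limiting-profile analysis: away from the origin, $u_\gamma$ remains bounded (this must be shown — it follows from the quantification in \eqref{QuantifDru}, i.e.\ energy $=4\pi+O(1)$, combined with elliptic estimates and the radial Strauss-type decay), and one extracts a weak $H^1_0$ limit $u_\infty$ solving some limit equation $\Delta u_\infty=\beta_\infty u_\infty g(u_\infty)$ with $u_\infty(0)<+\infty$. The matching condition between the inner Liouville bubble (mass $8\pi$, contributing a logarithmic singularity of strength carrying $4\pi$ Dirichlet energy) and the outer regular solution, \emph{quantified by the shift $a/2$}, pins down $\beta_\infty=\beta_{a/2}$ and identifies $u_\infty=u_{a/2}$; here one invokes the uniqueness of the radial solution at each fixed $\beta$ established in the first step (applied at the limiting value $a/2$), which is what makes $u_{a/2}$ a well-defined object. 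Finally \eqref{WLimThm3} and \eqref{QuantifGa} follow by assembling the energy of the bubble ($4\pi$) and the energy of $u_\infty$ via the Brezis--Lieb / energy-splitting lemma.

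The main obstacle, I expect, is the \textbf{precise matching} that produces the specific constant $\tfrac{a}{2}$: one must track the subleading term in the expansion of $u_\gamma$ both in the bubble region and in the transition (neck) region, show that the $e^{-au}$ factor shifts the effective height by exactly $a/2$, and rule out any additional contributions (e.g.\ from the lower-order behavior of $g$ near $0$ allowed by \eqref{G2}, which fortunately affects only a region where $u_\gamma$ is small and the energy contribution is negligible). A clean way to organize this is via a Pohozaev identity on annuli $B_\delta\setminus B_{\mu_\gamma\rho}$ and a careful ODE analysis of $w_\gamma$ on the corresponding scales, reducing everything to the one-dimensional problem; the uniqueness from the shooting formulation then upgrades the convergence along subsequences to full convergence, giving the stated asymptotics for $\beta_\gamma$ and $u_\gamma$ without passing to a subsequence.
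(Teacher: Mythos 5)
Your overall architecture is the same as the paper's: (i) reduce to a radial ODE shooting problem for existence and uniqueness, (ii) carry out a second-order blow-up expansion at the origin, and (iii) match the inner bubble to the outer profile to force the limiting solution to have central value $a/2$, then use the uniqueness from step (i) to identify $u_\infty=u_{a/2}$. You also correctly flag the matching as the hard part — the paper devotes two full steps to deriving the correction $\gamma-\tfrac{t_\gamma}{\gamma}+\tfrac{aS_\gamma}{\gamma^2}$ inside the bubble and the improved expansion $\gamma-\tfrac{t_\gamma}{\gamma}\bigl(1+\tfrac{a}{2\gamma}\bigr)+o(\tfrac{t_\gamma}{\gamma^2})$ across the neck, and then compares $u_\gamma(r_\gamma)$ computed from inside and from outside.

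There is, however, one genuine gap in the route you propose for the exterior region. You suggest establishing that $u_\gamma$ stays bounded away from the origin by appealing to Druet's quantification \eqref{QuantifDru}, i.e.\ by declaring that the Dirichlet energy is $4\pi+O(1)$. This is circular: Druet's theorem is stated for sequences that already admit a weak $H^1_0$ limit, which presupposes a uniform $H^1_0$ bound; here that bound is exactly the content of \eqref{QuantifGa}, which is what we are trying to prove. Moreover \eqref{QuantifDru} alone only gives $4\pi N$ for an unknown integer $N$, so it would not by itself pin down a single bubble, and the paper explicitly points out that the nonlinearities \eqref{EigenNonlin} do not even satisfy Druet's uniform-criticality hypothesis in general. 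The paper avoids all of this by working purely at the level of the radial ODE: once the bubble estimates give $\int_{B_0(r_\gamma)}\Delta u_\gamma\,dy=O(1/\gamma)$ and the definition of $r_\gamma$ via $u_\gamma(r_\gamma)=c_1$ gives $u_\gamma\le c_1$ (hence $\Delta u_\gamma$ bounded) on $\Omega\setminus B_0(r_\gamma)$, one gets $\|\Delta u_\gamma\|_{L^1(\Omega)}=O(1)$ directly, hence $u_\gamma(r)=O(\log\tfrac1r)$ and $C^1_{loc}(\bar\Omega\setminus\{0\})$ compactness without any appeal to energy quantification. You should replace the appeal to \eqref{QuantifDru} with this self-contained $L^1$-Laplacian argument. (Also a small slip: with $u(r)=w_\gamma(\sqrt{\beta}\,r)$ and $R_\gamma$ the first zero of $w_\gamma$, one gets $\beta_\gamma=R_\gamma^{2}$, not $R_\gamma^{-2}$.)
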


It is interesting to notice that the value at $0$ of the weak limit of the sequence $(u_\gamma)_\gamma$ in Theorem \ref{Thm3} depends only on the choice of $a$, that is on the asymptotic behavior of  the function $g$.  We stress that for any $a>0$ one can easily construct a function $g$ satisfying \eqref{G1} and \eqref{G2} such that $g(t)= 1$ for $t\le \frac{a}{2}$. For such function $g$, the nonlinearities  $f_\gamma(x,u)= \beta_\gamma u g(u)$ have \emph{uniformly critical growth} according to Druet's definition in \cite{DruetDuke}. Moreover, since the $u_\gamma$'s are positive and radially decreasing, one has $\beta_\frac{a}{2}=\lambda_1$ and $u_\frac{a}{2}=\frac{a}{2 v_1(0)} v_1$ so that the quantification result of  Theorem \ref{Thm3} reads as 
\begin{equation}\label{QuantificationImproved}
\int_{\Omega}|\nabla u_\gamma |^2dy\to 4\pi+ \frac{a^2\lambda_1}{4 v_1(0)^2}.
\end{equation}
Note that the value in the RHS of \eqref{QuantificationImproved} can be arbitrarily large or arbitrarily close to $4\pi$ depending on the choice of $a$.


\section*{Acknowledgements} 
The first author was supported by Swiss National Science Foundation, projects nr. PP00P2-144669 and PP00P2-170588/1. The second author was supported by BQR Accueil EC 2018, funding the new researchers in the university of Lyon 1.

\section{Proof of Theorem \ref{MainThm}}\label{SectPfThm1PositiveCase}
In the whole paper, $\Omega=B_0(1)$ is the unit disk centered at $0$ in $\mathbb{R}^2$. If $f$ is a radially symmetric function, since no confusion is possible, we will often write 
\begin{equation}\label{NotationRadial}
f(|x|)\text{ instead of }f(x)\,.
\end{equation} 
In the sequel, we let $\lambda_1$ and $v_1$ be  as in Theorem \ref{MainThm}. For all $R>0$, it is known that the smallest eigenvalue $\tilde{\lambda}_R$ of $\Delta$ in $B_0(R)$ with zero Dirichlet boundary condition is given by
\begin{equation}\label{Lambda1DeR}
\tilde{\lambda}_R=\frac{\lambda_1}{R^2}\,.
\end{equation}
\noindent Let $\tilde{v}_R\ge 0$ be the (radial) eigenfunction associated to ${\tilde{\lambda}}_R$  such that $\int_\Omega \tilde{v}_R^2 dy=1$.  We get first the following existence result. 
 
\begin{lem}\label{ExistRes}
Let $\lambda\in(0,\lambda_1)$ and $\gamma>0$ be given. Then there exists $\beta>0$ and a smooth function $u$ in $\Omega$ such that $u(0)=\gamma$ and such that $u$ solves 
\begin{equation}\label{EqSing}
\begin{cases}
&\Delta u=\lambda u+ \beta u \exp(u^2)\,,~~ u>0\text{ in }\Omega\,,\\
&u=0\text{ on }\partial\Omega\,.
\end{cases}
\end{equation}
\end{lem}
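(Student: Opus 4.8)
The plan is to construct $u$ radially symmetric by a shooting argument in the parameter $\beta$. Writing $u=u(r)$, problem \eqref{EqSing} reads $-(ru')'=r(\lambda+\beta e^{u^2})u$ on $(0,1)$ with $u(0)=\gamma$, $u'(0)=0$ and $u(1)=0$. For each fixed $\beta>0$ I would first solve the initial value problem $-(ru')'=r(\lambda+\beta e^{u^2})u$, $u(0)=\gamma$, $u'(0)=0$; although $r=0$ is a singular point, the usual Picard scheme applied to the integral formulation $u(r)=\gamma-\int_0^r s^{-1}\int_0^s t(\lambda+\beta e^{u^2})u\,dt\,ds$ gives a unique local solution, which one continues. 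As long as $u>0$ one has $(ru')'=-r(\lambda+\beta e^{u^2})u<0$, so $ru'$ strictly decreases from $0$, hence $u'<0$ and $u$ strictly decreases from $\gamma$; in particular $0<u<\gamma$ and $u'$ is bounded, so the solution is global up to its first zero, which I call $R(\beta)$, and $u'(R(\beta))<0$, so this zero is simple.

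Next I would describe $\beta\mapsto R(\beta)$ by Sturm comparison against Bessel functions. On $[0,R(\beta))$, $u$ solves $-(ru')'=rc(r)u$ with $c(r)=\lambda+\beta e^{u(r)^2}$ and $\lambda+\beta\le c(r)\le\lambda+\beta e^{\gamma^2}$. Comparing with $w_\mu(r)=J_0(\sqrt\mu\,r)$ — which solves $-(rw_\mu')'=\mu r w_\mu$ and whose first zero is at $\sqrt{\lambda_1/\mu}$, consistently with \eqref{Lambda1DeR} — through the Wronskian $W=r(u'w_\mu-uw_\mu')$, which satisfies $W(0)=0$ and, on the set where $u$ and $w_\mu$ are positive, is monotone with the sign of $W'$ governed by $\mu-c$, I obtain the two-sided bound $\sqrt{\lambda_1/(\lambda+\beta e^{\gamma^2})}\le R(\beta)\le\sqrt{\lambda_1/(\lambda+\beta)}$. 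Hence $R(\beta)<+\infty$ for every $\beta>0$ and $R(\beta)\to0$ as $\beta\to+\infty$; on the other hand, as $\beta\to0^+$ the solution converges locally uniformly to $\gamma J_0(\sqrt\lambda\,r)$ by continuous dependence, so $R(\beta)\to\sqrt{\lambda_1/\lambda}>1$ because $\lambda<\lambda_1$. Since $R(\beta)$ is a simple zero, continuous dependence of the IVP on $\beta$ (uniform on the fixed interval $[0,\sqrt{\lambda_1/\lambda}]$) also shows that $\beta\mapsto R(\beta)$ is continuous on $(0,+\infty)$.

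By the intermediate value theorem there is then $\beta=\beta(\lambda,\gamma)\in(0,+\infty)$ with $R(\beta)=1$, and the corresponding $u=u_\beta$ restricted to $\overline{B_0(1)}$ is the desired solution of \eqref{EqSing}: it is radial, positive in $\Omega=B_0(1)$, vanishes on $\partial\Omega$, satisfies $u(0)=\gamma$ and $\Delta u=\lambda u+\beta u e^{u^2}$. A standard argument (integration by parts on $B_0(1)\setminus B_0(\eps)$ and $\eps\to0$, using $u'(\eps)=O(\eps)$) shows that $x\mapsto u(|x|)$ is a weak solution of the PDE on $\Omega$; elliptic regularity with the $C^\infty$ nonlinearity $(\lambda+\beta e^{u^2})u$ then gives $u\in C^\infty(\Omega)$ by bootstrap.

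I expect the only slightly delicate points to be making the Sturm/Wronskian comparison rigorous at the singular endpoint $r=0$ and the continuity of $R$ up to $\beta=0$; once the bound $\sqrt{\lambda_1/(\lambda+\beta e^{\gamma^2})}\le R(\beta)\le\sqrt{\lambda_1/(\lambda+\beta)}$ and the continuity of $\beta\mapsto R(\beta)$ are in hand, the existence of a suitable $\beta$ follows at once.
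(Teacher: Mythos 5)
Your proof is correct and follows the same overall shooting strategy as the paper: solve the radial IVP with $u(0)=\gamma$, locate the first zero $R(\beta)$, show $R$ is continuous with $R(0^+)>1$ and $R(\beta)\to 0$ as $\beta\to+\infty$, and conclude by the intermediate value theorem. Where you diverge is in the intermediate estimates. The paper proves global existence on all of $\mathbb{R}^2$ via the monotone energy $E(u_\beta)=(u_\beta')^2+\lambda u_\beta^2+\beta\exp(u_\beta^2)$, shows $R_\beta<+\infty$ by the integral bound $u_\beta(r)\le\gamma+\log\frac{\varepsilon_\beta}{r}\int_0^{\varepsilon_\beta}(\Delta u_\beta)s\,ds$, and obtains $R_\beta\to 0$ by testing the equation against the first Dirichlet eigenfunction of $B_0(R_\beta)$, which yields $\tilde{\lambda}_{R_\beta}>\beta$. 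You instead run a Sturm/Wronskian comparison against Bessel functions, exploiting $1\le e^{u^2}\le e^{\gamma^2}$ on $\{u>0\}$ to get the explicit two-sided bound $\sqrt{\lambda_1/(\lambda+\beta e^{\gamma^2})}\le R(\beta)\le\sqrt{\lambda_1/(\lambda+\beta)}$, which settles finiteness and the $\beta\to+\infty$ limit in one stroke and in sharper form. Your approach buys cleaner quantitative control of $R(\beta)$; the paper's buys the global-in-$\mathbb{R}^2$ existence of $u_\beta$, which it actually uses later (Section 2, to extend $\tilde u_\gamma$ to $\mathbb{R}^2$ via ODE continuation), so that choice is not incidental. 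One small looseness in your write-up: you assert continuous dependence "uniform on the fixed interval $[0,\sqrt{\lambda_1/\lambda}]$", but you have not established that the IVP solutions exist on this whole interval past their first zero; this is harmless because $R(\beta)$ is a simple zero and the equation is regular near $u=0$, so local continuation and continuous dependence in a neighborhood of $R(\beta_0)$ already give continuity of $R$ — but phrase it that way, or invoke the paper's energy argument to get global existence and then your uniform statement becomes literally true.
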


\begin{proof}[Proof of Lemma \ref{ExistRes}] 
Let $\lambda\in (0,\lambda_1)$ and $\gamma>0$ be fixed. For all given $\beta\ge0$, there exists a unique smooth and radially symmetric solution $u_\beta$ of
\begin{equation}\label{RadEqSing}
\begin{cases}
\Delta u=\lambda u+\beta u \exp(u^2)\,,\\
u(0)=\gamma\,,
\end{cases}
\end{equation}
well defined in $\mathbb{R}^2$. Indeed, if $[0,T_\beta)$ is the maximal interval of existence for $u_\beta$, we know from rather standard theory of radial elliptic equations that $T_\beta\in(0,+\infty]$, and that $T_\beta<+\infty$ implies that
\begin{equation}\label{RadCrit}
\limsup_{t\to T_\beta^-}|u_\beta'(t)|+|u_\beta(t)|=+\infty\,.
\end{equation}
Now observe that $E(u_\beta):[0,T_\beta)\to \mathbb{R}$ given by $E(u_\beta)=(u_\beta')^2+\lambda u_\beta^2+\beta \exp(u_\beta^2)$ is nonincreasing in $(0,T_\beta)$ by \eqref{RadEqSing}. Then, \eqref{RadCrit} cannot occur and $T_\beta=+\infty$ as claimed. 

Now, given $\beta>0$, there exists an $\varepsilon_\beta>0$ such that $u_\beta(s)> 0$ for all $s\in [0,\varepsilon_\beta]$ by continuity. Then, since the RHS of the first equation of \eqref{RadEqSing} is positive for $u_\beta>0$, and since 
 \begin{equation}\label{IPP}
- r u_\beta'(r)=\int_0^r (\Delta u_\beta)(s) s ds\,, 
\end{equation}
we get that
\begin{equation}\label{IntermComput}
u_\beta(r)\le \gamma+\log\frac{\varepsilon_\beta}{r} \int_0^{\varepsilon_\beta} (\Delta u_\beta)(s) s ds\,,
\end{equation}
for all $r\ge \varepsilon_\beta$ such that $u_\beta>0$ in $[\varepsilon_\beta,r]$. Setting now
\begin{equation}\label{DefRBeta}
R_\beta=\sup\left\{r>0\text{ s.t. }u_\beta(s)>0 \; \text{ for } s\in [0,r] \right\}\,,
\end{equation}
we clearly get from \eqref{IntermComput} that $0<R_\beta<+\infty$. Then we have  $u_\beta(R_\beta)=0$ and $u_\beta>0$ in $[0,R_\beta)$. Now, since $(\beta,r)\mapsto u_\beta(r)$ is continuous, and since we have that $u_\beta'(R_\beta)<0$ by \eqref{IPP}, we get that 
\begin{equation}\label{Continuity}
\beta\mapsto R_\beta \text{ is continuous in }[0,+\infty)\,.
\end{equation}
It is clear that $R_0=\sqrt{\lambda_1/\lambda}>1$   by \eqref{Lambda1DeR} and \eqref{RadEqSing}. Independently, multiplying \eqref{RadEqSing} by $\tilde{v}_{R_\beta}>0$  and integrating by parts in $B_0(R_\beta)$, we get that
\begin{equation*}
\begin{split}
{\tilde{\lambda}}_{R_\beta}\int_{B_0(R_\beta)} u_\beta {\tilde{v}}_{R_\beta} dy ~&=\int_{B_0(R_\beta)} \nabla u_\beta \nabla {\tilde{v}}_{R_\beta} dy\,,\\
&=\int_{B_0(R_\beta)} (\lambda+\beta \exp(u_\beta^2)) u_\beta {\tilde{v}}_{R_\beta} dy\,,\\
&> \beta\int_{B_0(R_\beta)} u_\beta {\tilde{v}}_{R_\beta} dy\,, 
\end{split}
\end{equation*}
so that ${\tilde{\lambda}}_{R_\beta} \to +\infty$ and then that $R_\beta\to 0$ as $\beta\to +\infty$, using \eqref{Lambda1DeR}. Thus by \eqref{Continuity}, we get that there exists $\beta>0$ such that $R_{\beta}=1$, which concludes the proof of Lemma \ref{ExistRes}. 
\end{proof}

\begin{proof}[Proof of Theorem \ref{MainThm}] Let $l>0$ be a {fixed real number}. Let $\bar{\lambda}_{\gamma}, \lambda_1$ and $v_1>0$ in $\Omega$ be as in the statement of Theorem \ref{MainThm}. By Lemma \ref{ExistRes}, there exists $\beta_\gamma>0$ such that the equation \eqref{EqSingGamma} admits a smooth solution $u_\gamma$ satisfying $u_\gamma(0)=\gamma$, for all $\gamma>0$ sufficiently large for $\bar{\lambda}_{\gamma}$ to be positive. First, we check that 
\begin{equation}\label{EqBetaToZero}
 \beta_\gamma=O\left(\frac{1}{\gamma} \right)\,,
\end{equation}
which implies that $\beta_\gamma\to 0$ as $\gamma\to+\infty$, as claimed in Theorem \ref{MainThm}. For this, it is sufficient to multiply the first equation in \eqref{EqSingGamma} by $v_1$ and to integrate by parts:
\begin{equation}\label{TestAgainstV1}
(\lambda_1-\bar{\lambda}_{\gamma}) \int_\Omega v_1 u_\gamma dy=\beta_\gamma \int_\Omega v_1 \exp(u_\gamma^2) u_\gamma dy\ge \beta_\gamma \int_\Omega v_1 u_\gamma dy\,.
\end{equation}
In view of \eqref{DefLambdaGamma}, this clearly implies \eqref{EqBetaToZero}. Now, we perform the blow-up analysis of the $u_\gamma$'s as $\gamma\to +\infty$, in order to get \eqref{WeakConv}-\eqref{QuantifWL}. Observe that we do not assume here that $(u_\gamma)_\gamma$ is bounded in $H^1_0$, as in \cite{ThiMan}. But since we are in a radially symmetric setting, we are able to start the analysis and to prove that the $u_\gamma$'s can be rescaled around $0$ in order to detect a bubble of Moser-Trudinger critical type.  Observe that our choice of $\bar{\lambda}_{\gamma}$ in \eqref{DefLambdaGamma} plays a key role for this to be true. Let $\mu_\gamma>0$ be given by
\begin{equation}\label{DefMuGamma}
\beta_\gamma \gamma^2 \exp(\gamma^2) \mu_\gamma^2=4\,,
\end{equation}
and $\tau_\gamma$ be given by
\begin{equation}\label{DefTauGamma}
u_\gamma(\mu_\gamma y)=\gamma-\frac{\tau_\gamma(y)}{\gamma}\,.
\end{equation}
Observe that, since $\Delta u_\gamma > 0$ in \eqref{EqSingGamma}, $u_\gamma$ is radially decreasing in $\Omega$, so that 
\begin{equation}\label{RadDec}
u_\gamma(r)\le u_\gamma(0)=\gamma\,,\text{ for all }r\in[0,1]\,.
\end{equation}
Here and often in the sequel, we use the identifications of \eqref{NotationRadial}.

\begin{Step}\label{St1}
We have that
\begin{equation}\label{MuTo0}
\lim_{\gamma\to +\infty} \mu_\gamma\gamma=0
\end{equation}
and
\begin{equation}\label{ConvergenceLocale}
\lim_{\gamma\to +\infty}\tau_\gamma={T_0:=}\log(1+|\cdot|^2)\text{ in }{C^{2}_{loc}}(\mathbb{R}^2)\,.
\end{equation}
\end{Step}

\begin{proof}[Proof of Step \ref{St1}]
By \eqref{DefMuGamma}, \eqref{MuTo0} is equivalent to 
\begin{equation}\label{FirstAssertion}
\lim_{\gamma\to +\infty} \beta_\gamma \exp(\gamma^2)=+\infty\,.
\end{equation}
In order to prove \eqref{FirstAssertion}, assume by contradiction that 
\begin{equation}\label{Contrad1}
\beta_\gamma \exp(\gamma^2)=O(1)\,,
\end{equation}
up to a subsequence. Then, if $w_\gamma$ is given by $u_\gamma=\gamma w_\gamma$, we have that $0\le w_\gamma \le 1$  and $w_\gamma(0)=1$. Moreover, by \eqref{DefLambdaGamma}, \eqref{EqSingGamma}, \eqref{RadDec}, \eqref{Contrad1} and standard elliptic theory, we get that
\begin{equation}\label{Contrad2}
\lim_{\gamma\to +\infty} w_\gamma=w_\infty\text{ in }C^1(\bar{\Omega})\,,
\end{equation}
where $w_\infty(0)=1$. Then, since $w_\infty>0$ in   a neighborhood of $0$, we have that 
\[
\Delta w_\gamma\ge \lambda_1 (1+o(1)) w_\infty>0
\] 
around $0$, so that there exists $\varepsilon_0\in (0,1)$ such that
\begin{equation}\label{ContrWGamma}
\varepsilon_0 |x|^2\le 1-w_\gamma(x)\,, \text{ for all } x\in \Omega\,,
\end{equation}
using that $w_\gamma$ is radially symmetric and decreasing. Thus, using \eqref{Contrad1}, \eqref{ContrWGamma} and $0\le w_\gamma \le 1$, we get that
\begin{equation}\label{Contrad3}
\begin{split}
\beta_\gamma \int_{\Omega} v_1 \exp(u_\gamma^2) u_\gamma dx~&=\beta_\gamma \exp(\gamma^2)  \gamma \int_\Omega v_1 \exp(-\gamma^2(1-w_\gamma^2)) w_\gamma dx\,,\\
&=O\left(\gamma \int_\Omega \exp(-\varepsilon_0 \gamma^2 |x|^2) dx\right)\,,\\
&=o\left(1\right)\,.
\end{split}
\end{equation}
Independently, since $w_\infty\ge 0$ and $w_\infty\not\equiv 0$, we get from \eqref{Contrad2} that
\begin{equation}\label{Contrad4}
\gamma=O\left(\int_\Omega v_1 u_\gamma dx\right)\,.
\end{equation}
Combining the equality in \eqref{TestAgainstV1}, the last estimate in \eqref{Contrad3} and \eqref{Contrad4}, we clearly get a contradition with our definition of $\varepsilon_\gamma$ below \eqref{DefLambdaGamma}. Then, \eqref{Contrad1} cannot hold true and \eqref{FirstAssertion} and \eqref{MuTo0} are proved. Finally, since \eqref{DefMuGamma}-\eqref{MuTo0} give local uniform bounds for $\Delta \tau_\gamma$,  we get \eqref{ConvergenceLocale} by now rather standard arguments (see for instance \cite[Lemma 3]{MalchMartJEMS})  and elliptic estimates. 
\end{proof}

Next, we prove that $\beta_\gamma$ does not converge to zero too fast. In the sequel it is useful to denote 
\begin{equation}\label{DefTGamma}
t_\gamma(r) := T_0\left(\frac{r}{\mu_\gamma}\right)\,= \log\left(1+\frac{r^2}{\mu_\gamma^2} \right). 
\end{equation}

\begin{Step}\label{St2}
We have  that
\begin{equation}\label{BetaNotTroSm}
|\log \beta_\gamma|=o(\gamma^2)\,.
\end{equation}
\end{Step}

\begin{proof}[Proof of  Step \ref{St2}]
In view of \eqref{EqBetaToZero}, assume by contradiction that
\begin{equation}\label{Eq1}
\lim_{\gamma \to +\infty} \frac{\log\frac{1}{\beta_\gamma}}{\gamma^2}>0\,.
\end{equation}
Here and in the sequel, we argue up to subsequences. Let $r_\gamma\in(0,1)$ be given by
\begin{equation}\label{Eq2}
\beta_\gamma \exp(u_\gamma(r_\gamma)^2)=1\,.
\end{equation}
By \eqref{EqBetaToZero}, \eqref{FirstAssertion}, and since $u_\gamma$ is radially decreasing and zero on $\partial\Omega$, $r_\gamma$ is well defined. In particular, we have that
\begin{equation}\label{Eq3}
\begin{cases}
&\beta_\gamma \exp(u_\gamma^2)>1 \text{ in }[0,r_\gamma)\,,\\
&\beta_\gamma \exp(u_\gamma^2)\le 1 \text{ in }[r_\gamma,1] \,.
\end{cases}
\end{equation}
By \eqref{Eq1} and \eqref{Eq2}, we have that
\begin{equation}\label{Eq4}
\lim_{\gamma\to +\infty} \frac{u_\gamma^2(r_\gamma)}{\gamma^2}>0\,.
\end{equation}
Observe also that $r_\gamma\gg \mu_\gamma$ by \eqref{DefTauGamma}, \eqref{ConvergenceLocale}, \eqref{FirstAssertion} and \eqref{Eq2}. Now, we prove that 
\begin{equation}\label{Eq6}
u_\gamma=\gamma-\frac{t_\gamma(1+o(1))}{\gamma}\text{ uniformly in }[0,r_\gamma]\,,
\end{equation}
as $\gamma\to +\infty$, where $t_\gamma$ is given by \eqref{DefTGamma}. For this, for any given $\eta\in(0,1)$, we let $r'_\gamma$ be given by
\begin{equation}\label{Eq5}
r'_\gamma=\sup\left\{r\in(0,r_\gamma]\text{ s.t. }\left|u_\gamma-\left(\gamma-\frac{t_\gamma}{\gamma} \right) \right|\le \eta\frac{ t_\gamma}{\gamma}\text{ in }(0,r] \right\}\,.
\end{equation}
In order to get \eqref{Eq6}, since $\eta$ may be arbitrarily small, it is sufficient to prove that 
\begin{equation}\label{CClInter}
r'_\gamma=r_\gamma\,,
\end{equation}
for all $\gamma\gg 1$. Observe that \eqref{Eq4} and \eqref{Eq5} imply that
\begin{equation}\label{RkInter}
\lim_{\gamma\to +\infty}\frac{t_\gamma(r'_\gamma)}{\gamma^2}<1\,,
\end{equation}
if $\eta$ was chosen small enough.  By the definition of $r_\gamma'$ in \eqref{Eq5}, for any $r\in [0,r_\gamma']$, we have that
\begin{equation}\label{u^2}
u_\gamma(r)^2 \le \gamma^2 + t_\gamma(r)\bra{ -2+2\eta + \frac{t_\gamma(r)}{\gamma^2}(1-2\eta + \eta^2)}.
\end{equation}
In particular,by \eqref{DefMuGamma}, \eqref{RadDec}, \eqref{RkInter} and  \eqref{u^2}, for any sufficiently small $\eta$  there exists $\kappa=\kappa(\eta)>1$ such that 
\begin{equation}\label{DomConv}
\beta_\gamma \exp (u_\gamma^2) u_\gamma\le \frac{4}{\gamma\mu_\gamma^2} \exp(-\kappa t_\gamma)\,,
\end{equation}
in $[0,r_\gamma']$, for sufficiently large $\gamma$.  Let $(s_\gamma)_\gamma$ be an arbitrary sequence such that $s_\gamma\in[0,r'_\gamma]$, for all $\gamma$. If $s_\gamma = O(\mu_\gamma)$, then, arguing as in \eqref{IPP}, we get from  \eqref{DefTauGamma}, \eqref{ConvergenceLocale}, and  \eqref{DefTGamma} that  
\begin{equation}\label{SgammaCase1}
u_\gamma'(s_\gamma) +  \frac{t_\gamma'(s_\gamma)}{\gamma} = \frac{1}{\gamma s_\gamma} \int_{0}^\frac{s_\gamma}{\mu_\gamma} ((\Delta \tau_\gamma)(s) - (\Delta T_0)(s)) s ds = o\left( \frac{s_\gamma}{\mu_\gamma^2\gamma}\right).
\end{equation}
If instead $s_\gamma\gg \mu_\gamma $, then given $R\gg 1$ we compute
\begin{equation}\label{Eq7}
\begin{split}
\int_0^{s_\gamma} (\Delta u_\gamma)(s) 2\pi s ds &=O\left(\int_0^{R\mu_\gamma} \gamma s ds\right)+O\Bigg(\int_{R\mu_\gamma}^{s_\gamma} \underset{\ge 1}{\underbrace{(\beta_\gamma \exp(u_\gamma^2))}} u_\gamma s ds\Bigg)\\
&\quad \quad+\beta_\gamma\int_0^{R\mu_\gamma}  \exp(u_\gamma^2) u_\gamma 2\pi s ds\,,\\
&=o\left(\frac{1}{\gamma} \right)+\frac{1}{\gamma}
O\Bigg(\int_{R}^{+\infty} \exp(-\kappa T_0 )r dr\Bigg)\\
 &\quad \quad +\frac{1}{\gamma}\int_0^{s_\gamma/\mu_\gamma} \frac{8\pi r}{(1+r^2)^2}  dr\,, 
\end{split}
\end{equation}
using \eqref{EqSingGamma}, \eqref{DefMuGamma}, \eqref{DefTauGamma},  \eqref{MuTo0},  \eqref{ConvergenceLocale},  \eqref{Eq3} and \eqref{DomConv}.
Then, as $R$ may be arbitrarily large in \eqref{Eq7}, we get that
\begin{equation}\label{Eq8}
\begin{split}
\int_0^{s_\gamma} (\Delta u_\gamma)(s) 2\pi s ds=o\left(\frac{1}{\gamma} \right)+\frac{1}{\gamma}\int_0^{s_\gamma/\mu_\gamma} \frac{8\pi r}{(1+r^2)^2} dr\,,  \\
= o\left(\frac{1}{\gamma} \right)-\frac{1}{\gamma}\int_0^{s_\gamma} (\Delta t_\gamma)(s) 2\pi s ds\,.
\end{split}
\end{equation}
In any case we obtain that
\begin{equation}\label{DerinSgamma}
u_\gamma'(s_\gamma) + \frac{t_\gamma'(s_\gamma)}{\gamma} =  o\left( \frac{t_\gamma'(s_\gamma)}{\gamma}\right),
\end{equation}
either by \eqref{SgammaCase1} if $s_\gamma=O(\mu_\gamma)$, or by \eqref{Eq8} arguing as in \eqref{IPP} if $s_\gamma\gg\mu_\gamma$.  Then, using that $u_\gamma(0)=\gamma$, $t_\gamma(0)=0$, and the fundamental theorem of calculus, we get \eqref{CClInter} from \eqref{DerinSgamma} and then conclude the proof of \eqref{Eq6}. Observe further that 
\begin{equation}\label{UpToRgamma}
\int_{0}^{r_\gamma} (\Delta u_\gamma)(s)2\pi s ds =\frac{4\pi+o(1)}{\gamma}\,,
\end{equation}
by \eqref{Eq8}. Now, let $(s_\gamma)_\gamma$ be such that $s_\gamma\in(r_\gamma,1]$ for all $\gamma$. We have that
\begin{equation}\label{Eq9}
\begin{split}
&\left|\int_0^{s_\gamma} (\Delta u_\gamma)(s) 2\pi s ds- \int_{0}^{r_\gamma} (\Delta u_\gamma)(s)2\pi s ds\right|\\
&\le  (\lambda_1+1) \int_{r_\gamma}^{s_\gamma}u_\gamma(s) 2\pi s ds\,,\\
&\le s_\gamma (\lambda_1+1)\sqrt{\pi}\|u_\gamma\|_2\,,
\end{split}
\end{equation}
by \eqref{EqSingGamma}, \eqref{Eq3} in $[r_\gamma,s_\gamma]$, and the  Cauchy-Schwarz inequality, where $\|\cdot\|_p$ stands for the $L^p$ norm in $\Omega$. 
Then, estimating $u_\gamma'$ as in \eqref{IPP}, we get from \eqref{UpToRgamma} and \eqref{Eq9} that
\begin{equation}\label{ExtControl}
\left|u_\gamma(r)-\frac{\log\frac{1}{r^2}}{\gamma}\right|=o\left(\frac{\log\frac{1}{r^2}}{\gamma}\right)+O\left(\|u_\gamma\|_2 \right)\,,
\end{equation}
for all $r\in[r_\gamma,1]$ and all $\gamma$, using the fundamental theorem of calculus and $u_\gamma(1)=0$.  Now, we evaluate $u_\gamma(r_\gamma)$ in both formulas \eqref{Eq6} and \eqref{ExtControl}. Since  \eqref{DefMuGamma} and \eqref{DefTGamma} imply that  
\begin{equation}\label{LowPOV}
\gamma-\frac{t_\gamma(r)}{\gamma}=\frac{1}{\gamma}\log \left(\frac{1}{\beta_\gamma (\mu_\gamma^2+ r^2)}\right)   +O\left(\frac{\log \gamma}{\gamma} \right)
\end{equation}
for any $r\in [0,1]$, and that $\log\frac{1}{r_\gamma^2}=O\left(\log\frac{1}{\mu_\gamma^2}\right) = O(\gamma^2)$, 
we get from \eqref{Eq1}, \eqref{Eq6}, \eqref{ExtControl}, and \eqref{LowPOV} that
\begin{equation}\label{CClPart1}
\lim_{\gamma\to +\infty}\frac{\|u_\gamma\|_2}{\gamma}>0\,.
\end{equation}
In order to conclude the proof, we now show that \eqref{CClPart1} contradicts our choice of $\varepsilon_\gamma$ in \eqref{DefLambdaGamma}. First, observe that $r_\gamma\to 0$, as $\gamma\to +\infty$. Otherwise, since  \eqref{Eq4} implies $\gamma = O(u_\gamma)$ in $[0,r_\gamma]$, using \eqref{Eq3} we would find that
\begin{equation}
\int_0^{r_\gamma} \beta_\gamma \exp(u_\gamma^2) u_\gamma 2\pi s ds \ge \int_0^{r_\gamma} u_\gamma 2\pi s \, ds \to +\infty
\end{equation}
as $\gamma\to +\infty$, which contradicts \eqref{UpToRgamma}.  Next we want to show that
\begin{equation}\label{Estim2}
\liminf_{\gamma\to +\infty}\inf_{r\in(r_\gamma,1]} \frac{u_\gamma(r_\gamma)-u_\gamma(r)}{\gamma(r-r_\gamma)^2}\in(0,+\infty]\,.
\end{equation}
From \eqref{EqSingGamma}, \eqref{RadDec}, \eqref{Eq3}, \eqref{CClPart1} and standard elliptic theory, we get that there exists a function $v$ such that
\begin{equation}\label{ConvLoc1}
\lim_{\gamma\to +\infty}\frac{u_\gamma}{\|u_\gamma\|_2}\to v\text{ in }C^1_{loc}(\bar{\Omega}\backslash\{0\})\,, \quad {v\ge 0}\text{ in }\Omega\backslash\{0\}\,.
\end{equation}
Moreover, by \eqref{RadDec} and \eqref{CClPart1}, we get that $u_\gamma/\|u_\gamma\|_2\to v$ strongly in $L^2$, so that $\int_{\Omega} v^2 dx=1$ and then that $v\not \equiv 0$ in $\Omega$.  Using \eqref{EqSingGamma}, \eqref{CClPart1}, \eqref{ConvLoc1} and the radial decay of the $u_\gamma$'s, there exists  $\delta\in (0,1)$ such that $\gamma \lesssim u_\gamma \lesssim \Delta u_\gamma$ in $[0,\delta]$. Then, estimating $u_\gamma'$ as in \eqref{IPP} we find 
\begin{equation*}
- u_\gamma'(r)  \gtrsim \gamma r
\end{equation*}
in $[0,1]$, and from the fundamental theorem of calculus that
 \begin{equation}\label{Estim3}
\liminf_{\gamma\to +\infty}\inf_{r\in(r_\gamma,1]} \frac{u_\gamma(r_\gamma)-u_\gamma(r)}{\gamma(r^2-r_\gamma^2)}\in(0,+\infty],
\end{equation}
which concludes the proof of \eqref{Estim2}.  Now, we compute
\begin{equation}\label{Estim5}
\begin{split}
&\int_0^1 v_1 \beta_\gamma \exp(u_\gamma^2) u_\gamma 2\pi s ds\\
&=\int_0^{r_\gamma} v_1 \beta_\gamma \exp(u_\gamma^2) u_\gamma 2\pi s ds+\int_{r_\gamma}^1 v_1 \exp(u_\gamma^2-u_\gamma(r_\gamma)^2) u_\gamma 2\pi s ds\,,\\
&=O\left(\frac{1}{\gamma} \right)+O\left(\|u_\gamma\|_\infty \int_{r_\gamma}^1 \exp(-u_\gamma(r_\gamma)(u_\gamma(r_\gamma)-u_\gamma)) s ds \right)\\
&=O\left(\frac{1}{\gamma} \right)+O\left(\gamma\int_0^{1-r_\gamma}\exp(-\varepsilon_0 \gamma^2 r^2) {(r+r_\gamma)} dr \right)= O\left(\frac{1}{\gamma} \right)+{O(r_\gamma)}\,.
\end{split}
\end{equation}
The first equality in \eqref{Estim5} uses \eqref{Eq2}; the second one uses \eqref{EqSingGamma}, \eqref{UpToRgamma}, and $u_\gamma\ge0$; in the third one, the existence of such a positive $\varepsilon_0$ is given by \eqref{Eq4} and  \eqref{Estim2}.  Independently, \eqref{CClPart1} and \eqref{ConvLoc1} with  $v\not\equiv 0$ imply that
\begin{equation}\label{Estim4}
\gamma=O\left(\int_0^1 v_1 u_\gamma r dr\right)\,.
\end{equation} 
But the equality in \eqref{TestAgainstV1}, and \eqref{Estim5}-\eqref{Estim4} clearly contradict our definition of $\varepsilon_\gamma$ in \eqref{DefLambdaGamma}. Then \eqref{Eq1} cannot be true.  This concludes the proof of \eqref{BetaNotTroSm} and that of Step \ref{St2}.
\end{proof}
Let $t_\gamma$ be as in \eqref{DefTGamma} and let $\rho_{1,\gamma},\rho_{\gamma}', \rho_{2,\gamma}>0$ be given by
\begin{equation}\label{DefRadii}
t_\gamma(\rho_{1,\gamma})=\gamma \,,\quad t_\gamma(\rho_{\gamma}')=\frac{\gamma^2}{2}\text{ and }t_\gamma(\rho_{2,\gamma})=\gamma^2-\gamma\,.
\end{equation}
 Since we have now \eqref{BetaNotTroSm}, resuming verbatim the argument in \cite[Step 3.2]{ThiMan}, there exists $\bar{C}>0$
 \begin{equation}\label{ManciniThi}
 \left|u_\gamma-\left(\gamma-\frac{t_\gamma}{\gamma}\right)\right|\le \bar{C}\frac{1+t_\gamma}{\gamma^3}\,,
 \end{equation} 
 in $[0,\rho_{\gamma}']$, and that
 \begin{equation}\label{ManciniThi2}
 \left|\beta_\gamma u_\gamma \exp(u_\gamma^2)-\frac{4\exp(-2t_\gamma)}{\gamma \mu_\gamma^2}\right|\le \frac{4\bar{C}}{3}\frac{\exp(-2t_\gamma)(1+t_\gamma^2)}{\gamma^3 \mu_\gamma^2}\,,
 \end{equation}
 in $[0,\rho_{1,\gamma}]$, for all $\gamma$. Observe that \eqref{DefRadii} and  \eqref{BetaNotTroSm} imply
\begin{equation}\label{Other}
 \rho_\gamma' \gg \rho_{1,\gamma} = \mu_\gamma  \exp ( \gamma (1/2+o(1))) \gg \mu_\gamma\,,
\end{equation} 
and that
\begin{equation}\label{Modif1}
\rho_{1,\gamma}^2 \gamma=o\left(\frac{1}{\gamma^3} \right)\,.
\end{equation}
Note that \eqref{ManciniThi2}, \eqref{Other} and \eqref{Modif1} with $u_\gamma\le \gamma$ imply  that 
 \begin{equation}\label{New2}
 \left|\int_0^{\rho_{1,\gamma}} (\Delta u_\gamma ) 2\pi r dr-\frac{4\pi}{\gamma}\right|\le  \frac{4\pi\bar{C}+o(1)}{\gamma^3}\,.
\end{equation} Let $r'_\gamma\in(0,1]$ be given by 
\begin{equation}\label{Eq2Bis}
\begin{split}
&\beta_\gamma \exp(u_\gamma(r'_\gamma)^2)=\frac{1}{\gamma^2}\text{ if }\beta_\gamma=o\left( \frac{1}{\gamma^2}\right)\,,\\
&r_\gamma'=1\text{ otherwise}\,.
\end{split}
\end{equation}
Using $u_\gamma(0)=\gamma$, $u_\gamma(1)=0$ and \eqref{BetaNotTroSm}, $r'_\gamma$ is  well defined and positive. We have in addition that
\begin{equation}\label{Eq3Bis}
\begin{cases}
&\beta_\gamma \exp(u_\gamma^2)\ge \frac{1}{\gamma^2} \text{ in }[0,r'_\gamma]\,,\\
&\beta_\gamma \exp(u_\gamma^2)\le \frac{1}{\gamma^2} \text{ in }(r'_\gamma,1] \,.
\end{cases}
\end{equation} 
Moreover, since \eqref{DefMuGamma} and \eqref{BetaNotTroSm} give
\begin{equation}\label{EstTGamma}
t_\gamma\le \gamma^2(1+o(1)) \quad \text{ in } [0,1],
\end{equation}
we get from the definition of $\rho_\gamma'$ in \eqref{DefRadii} and \eqref{ManciniThi} that $u_\gamma(\rho_\gamma')^2 = \frac{\gamma^2}{4} +O(1)$, so that  \eqref{BetaNotTroSm} and \eqref{Eq2Bis} give 
\begin{equation}\label{Rgamma'Compared}
 r_\gamma' \ge \rho_{\gamma}'\,,
\end{equation}
for $\gamma$ large enough, this inequality being obvious if $r_\gamma'=1$. Finally, we define 
\begin{equation}\label{DefRhoGamma}
\rho_\gamma = \min( \rho_{2,\gamma},r_\gamma') \,.
\end{equation}

\begin{Step}
Let $\bar C$ be as in \eqref{ManciniThi}, \eqref{ManciniThi2} and \eqref{New2}. For any $R_0>\bar{C}$ we have  
\begin{equation}\label{NewToProve}
\left|u_\gamma-\left(\gamma-\frac{t_\gamma}{\gamma} \right)\right|\le \frac{R_0}{\gamma} \quad \text{ in } [0,\rho_\gamma]\,,
\end{equation}
for sufficiently large $\gamma$.  Moreover,
\begin{equation}\label{NewToProve3}
\int_{\rho_{1,\gamma}}^{\rho_{\gamma}} (\Delta u_\gamma)(s) 2\pi s ds  = o\left(\frac{1}{\gamma^3}\right). 
\end{equation}
\end{Step}
\begin{proof}
For a given $R_0>\bar{C}$, let ${\widetilde\rho_\gamma >0}$ be given by
 \begin{equation}\label{DefRhoG}
 \widetilde \rho_\gamma =\sup\left\{r\in (0,\rho_\gamma] \text{ s.t. } \left|u_\gamma-\left(\gamma-\frac{t_\gamma}{\gamma} \right)\right|\le \frac{R_0}{\gamma}\text{ in }[0,r]\right\}\,.
 \end{equation}
Note that \eqref{ManciniThi}, \eqref{EstTGamma} and \eqref{Rgamma'Compared} imply that $\widetilde{\rho}_\gamma \ge \rho_\gamma' \ge \rho_{1,\gamma}$, for $\gamma$ large enough.  By \eqref{EqSingGamma} and \eqref{Eq3Bis}, for all $s_\gamma \in[\rho_{1,\gamma}, \widetilde \rho_\gamma]$ we compute
\begin{equation}\label{New1}
\begin{split}
\int_0^{s_\gamma} (\Delta u_\gamma) 2\pi r dr = &   \int_{0}^{\rho_{1,\gamma}}  (\Delta u_\gamma) 2\pi r dr  +  O\left( \gamma^2 \beta_\gamma \int_{\rho_{1,\gamma}}^{\widetilde \rho_\gamma} \exp(u_\gamma^2) u_\gamma   2\pi r dr \right)\,.
\end{split}
\end{equation}
Moreover, thanks to \eqref{DefMuGamma}, \eqref{EstTGamma} and \eqref{DefRhoG} we have that 
\begin{equation}\label{Observation}
\begin{split}
\int_{\rho_{1,\gamma}}^{\widetilde \rho_\gamma} \beta_\gamma \exp(u_\gamma^2) u_\gamma r  dr  & \le   \int_{\rho_{1,\gamma}}^{\rho_{{2},\gamma}} \frac{\exp(-2t_\gamma+(t_\gamma^2/\gamma^2)+O(1))}{\gamma \mu_\gamma^2} r dr\\ & =o\left(\frac{1}{\gamma^{5}} \right)\,.
\end{split}
\end{equation}  
Then, from \eqref{New2}, \eqref{New1} and \eqref{Observation} we get that 
 \begin{equation}\label{EstimLapl3}
 \left|\int_0^{s_\gamma} (\Delta u_\gamma) 2\pi r dr-\frac{4\pi}{\gamma}\right|\le \frac{4\pi\bar{C}+o(1)}{\gamma^3} \, . 
\end{equation}  
 Since $\frac{\widetilde\rho_{\gamma}}{\mu_{\gamma}} \to +\infty$ exponentially fast as  $\gamma\to +\infty$ by \eqref{Other}, we also get that 
\begin{equation}\label{New3}
\int_0^{s_\gamma} (\Delta t_\gamma )2\pi r dr = -4\pi + o\left(\frac{1}{\gamma^2}\right).  \, 
\end{equation}
Now \eqref{EstimLapl3}  and \eqref{New3} give an estimate on $u_\gamma' +  \frac{t_\gamma'}{\gamma}$ in $[\rho_{1,\gamma},\widetilde\rho_{\gamma}]$ as in \eqref{IPP} so that, by the fundamental theorem of calculus, we get 
$$
\left|\left[u_\gamma-\left(\gamma-\frac{t_\gamma}{\gamma}\right)\right]_{\rho_{1,\gamma}}^s\right|\le \frac{\bar{C}+o(1)}{\gamma^3} \log\frac{s^2}{\rho_{1,\gamma}^2}\,,
$$
so that
\begin{equation}\label{ManciniThi3}
\left|u_\gamma(s)-\left(\gamma-\frac{t_\gamma(s)}{\gamma}\right)\right|\le \left(\bar{C}+o(1)\right)\frac{1+t_\gamma(s)}{\gamma^3}\,,
\end{equation}
for all $s\in [\rho_{1,\gamma},\widetilde \rho_\gamma ]$, by \eqref{ManciniThi} at $\rho_{1,\gamma}$ and  \eqref{Other}. Then we get 
\begin{equation}\label{Equality}
\widetilde \rho_\gamma=\rho_\gamma,
\end{equation}
using $R_0>\bar{C}$ and $\widetilde \rho_\gamma > \rho_{1,\gamma}$. The proof of \eqref{NewToProve} is complete.  Finally, the estimate in \eqref{NewToProve3} follows from  \eqref{EqSingGamma},  \eqref{Eq3Bis},  \eqref{Observation} and  \eqref{Equality}. 
\end{proof}

\begin{Step} 
We have that 
\begin{equation}\label{MinorL2Norm} 
\liminf_{\gamma\to +\infty} \|u_\gamma\|_2\in(0,+\infty]\,.
\end{equation}
Moreover, we have
\begin{equation}\label{RhoTo0}
\rho_\gamma\to 0\,,
\end{equation}
and 
\begin{equation}\label{NewToProve2}
u_\gamma(r)=\frac{\log\frac{1}{r^2}}{\gamma}+o(\|u_\gamma\|_2)+v_1(r)\|u_\gamma\|_2\,  \quad \text{ in } [\rho_{\gamma} ,1]\,
\end{equation}  
as $\gamma\to +\infty$\,. 
\end{Step}

\begin{proof}
First, coming back to \eqref{TestAgainstV1}, we get that
\begin{equation*}
\begin{split}
\frac{4\pi v_1(0)+o(1)}{\gamma} =\beta_\gamma \int_0^{{\rho_{{1},\gamma}}} v_1 u_\gamma \exp(u_\gamma^2) 2\pi r dr~&\le \left(\lambda_1-{\bar{\lambda}_{\gamma}} \right) \int_0^1 v_1 u_\gamma 2\pi r dr\\
&= O\left(\frac{\|u_\gamma\|_2}{\gamma} \right)\,,
\end{split}
\end{equation*}
by \eqref{ManciniThi2}, the Cauchy-Schwarz inequality and our definition of $\varepsilon_\gamma$ below \eqref{DefLambdaGamma}. This, clearly implies \eqref{MinorL2Norm}.

 In order to prove \eqref{RhoTo0}, we observe that \eqref{DefRadii}, \eqref{DefRhoGamma}  and \eqref{NewToProve} imply 
$$
u_\gamma \ge u_\gamma (\rho_\gamma) = \gamma-\frac{t_\gamma(\rho_\gamma)}{\gamma} + o(1) \ge \gamma -\frac{t_\gamma(\rho_{2,\gamma})}{\gamma} +o(1) =1+o(1)\,,
$$
in $[0,\rho_\gamma]$ and then  that 
\[
\rho_\gamma^2  = O\bra{\int_{0}^{\rho_\gamma} u_\gamma (s) 2\pi s ds } =   O\bra{\int_{0}^{\rho_\gamma} \Delta u_\gamma (s) 2\pi s ds } = O\bra{\frac{1}{\gamma}}\,,
\] 
by \eqref{New2} and \eqref{NewToProve3}.  

Now, we turn to the proof of \eqref{NewToProve2}.  Note that 
\begin{equation}\label{useful}
\beta_\gamma \exp(u_\gamma^2) = o(1) \quad \text{ in } (\rho_\gamma,1]\,.
\end{equation}
Indeed  \eqref{useful} follows directly from \eqref{Eq3Bis} if $\rho_\gamma=r_\gamma'$, while if  $\rho_\gamma=\rho_{2,\gamma}$  then \eqref{NewToProve} gives $u_\gamma = O(1)$ in $[\rho_\gamma,1]$ and \eqref{useful} follows from \eqref{EqBetaToZero}. 
Then, by \eqref{DefLambdaGamma}, \eqref{EqSingGamma} and \eqref{useful}, we always get that
\begin{equation}\label{Observation3}
\Delta u_\gamma = (\lambda_1+o(1)) u_\gamma \quad \text{ in }[\rho_\gamma,1]\,.
\end{equation}
By elliptic estimates, this implies that \eqref{ConvLoc1} holds true with $v$ satisfying $\Delta v = \lambda_1 v $ in $\Omega\setminus \{0\}$.  We shall now prove that $v=v_1$. By \eqref{New2}, \eqref{NewToProve3} and \eqref{Observation3},  we get that
\begin{equation}\label{Observation4}
\begin{split}
\int_{0}^{s_\gamma} (\Delta u_\gamma)(r) 2\pi r dr  &  = \int_{0}^{\rho_{\gamma}} (\Delta u_\gamma)(r)  2\pi r dr   +  O(s_\gamma \|u_\gamma\|_{2}) \\ 
& =  \frac{4\pi}{\gamma} + o\left(\frac{1}{\gamma^2}\right) + O(s_\gamma \|u_\gamma\|_2)\,,
\end{split}
\end{equation}
for any sequence $s_\gamma \in [\rho_{\gamma},1]$. Estimating $u_\gamma'$ by \eqref{Observation4} in the spirit of \eqref{IPP}, using the fundamental theorem of calculus and $u_\gamma(1)=0$, and noting that $\frac{1}{\gamma^2}\log \frac{1}{\rho_\gamma^2}=O(\|u_\gamma\|_2)$, we obtain that 
\begin{equation}\label{ExtControlImp}
\left|u_\gamma-\frac{\log\frac{1}{r^2}}{\gamma}\right| = O\left(\|u_\gamma\|_2 \right)\,,
\end{equation}
for all $r\in[\rho_{\gamma},1]$. By \eqref{MinorL2Norm}, \eqref{RhoTo0} and \eqref{ExtControlImp} we get that $v$ is bounded with bounded laplacian around $\{0\}$, and then $v\in C^1(\bar{\Omega})$. Take now a sequence $(\sigma_\gamma)_\gamma$ such that $\sigma_\gamma\ge \rho_{\gamma}$, $\sigma_\gamma=o(1)$ and 
\begin{equation}\label{Inter1}
\left\|\frac{u_\gamma}{\|u_\gamma\|_2} -  v \right\|_{C^0(\bar{\Omega}\setminus B_0(\sigma_\gamma))}\to 0\,,
\end{equation} 
as $\gamma\to+\infty$. Using $u_\gamma \le \gamma$ with \eqref{Modif1} for $r\le \rho_{1,\gamma}$, \eqref{ExtControlImp} for $\rho_{1,\gamma}\le r\le \sigma_\gamma$ and \eqref{Inter1}   otherwise, we get that $(u_\gamma/\|u_\gamma\|_2)_\gamma$ converges to $v$ in $L^2$ on the whole disk, so that $\int_\Omega v^2 dx=1$, $v>0$ in $\Omega$ and $v=v_1$. Finally, we observe that for any sequence $s_\gamma\in [\rho_\gamma,1]$ we have that
\begin{equation}\label{Inter2}
\begin{split}
\int_{0}^{\text{min}(\sigma_\gamma,s_\gamma)} (\Delta u_\gamma)(r) 2\pi r dr  &= \int_{0}^{\rho_{\gamma}} (\Delta u_\gamma)(r) 2\pi r dr\, \\
&~~~~~~~~~~~~~~+ \int_{\rho_{\gamma}}^{\text{min}(\sigma_\gamma,s_\gamma)} (\Delta u_\gamma)(r) 2\pi r dr\,,\\
&= \frac{4\pi}{\gamma}+o\bra{\frac{1}{\gamma^2}} + o(\text{min}(\sigma_\gamma,s_\gamma)\|u\|_{2})\,.
\end{split}
\end{equation}
In order to get the second equality in \eqref{Inter2}, we estimate the integral up to $\rho_{\gamma}$ by \eqref{New2} and \eqref{NewToProve3}, and the integral for $\rho_{\gamma}\le r\le \min\{\sigma_\gamma,s_\gamma\}$ by  \eqref{Observation3} and \eqref{ExtControlImp}. Using \eqref{DefLambdaGamma}, \eqref{Inter1} and \eqref{Inter2},  we find that 
$$
\int_{0}^{s_\gamma}(\Delta u_\gamma)(r) 2\pi r dr  = \frac{4\pi}{\gamma} + o(s_\gamma\|u\|_2) + \lambda_1\|u\|_2\int_{0}^{s_\gamma} v_1(r)2\pi r dr,
$$
for any sequence $s_\gamma\in[\rho_\gamma,1]$. Then, \eqref{NewToProve2} follows, using  again the fundamental theorem of calculus, with \eqref{IPP} and $u_\gamma(1)=0$.
\end{proof}

Now, we conclude the proof of Theorem \ref{MainThm}. Comparing  \eqref{NewToProve} (with \eqref{LowPOV}) and \eqref{NewToProve2} at $\rho_\gamma$, we obtain that
\begin{equation}\label{ControlBeta2}
\frac{\log\frac{1}{\beta_\gamma}}{\gamma}=\|u_\gamma\|_2 v_1(0)+o\left(\|u_\gamma\|_2\right)\,,
\end{equation}
so that in particular
\begin{equation}\label{BetaSmall2}
\beta_\gamma=o\left(\frac{1}{\gamma} \right)\,,
\end{equation}
by \eqref{MinorL2Norm}. Then, arguing as in \eqref{useful} but using \eqref{BetaSmall2} instead of \eqref{EqBetaToZero} if $\rho_\gamma = \rho_{2,\gamma}$, we get that
\begin{equation}\label{usefulImproved}
\beta_\gamma \exp(u_\gamma^2) = o\left(\frac{1}{\gamma}\right)\,\quad \mbox{ in } (\rho_\gamma,1]\,.
\end{equation}
Now, on the one hand we have
\begin{equation}\label{CClClose1}
\beta_\gamma \int_{0}^1 v_1 \exp(u_\gamma^2) u_\gamma 2\pi r dr=\frac{4\pi v_1(0)+o(1)}{\gamma}+o\left(\frac{\|u_\gamma\|_2}{\gamma} \right)
\end{equation}
using \eqref{ManciniThi2} for $r\le \rho_{1,\gamma}$, \eqref{NewToProve3} for $\rho_{1,\gamma}\le r\le \rho_\gamma$, and \eqref{usefulImproved} with the Cauchy-Schwarz inequality to estimate the integral in  $[\rho_\gamma,1]$.   On the other hand, using \eqref{Modif1} with $u_\gamma\le \gamma$ for $r\in[0,\rho_{1,\gamma}]$, \eqref{NewToProve3} with $\Delta u_\gamma \ge \bar \lambda_\gamma u_\gamma $ in $[\rho_{1,\gamma},\rho_\gamma]$,  and \eqref{MinorL2Norm}-\eqref{NewToProve2} in $[\rho_{\gamma},1]$, we obtain that 
\begin{equation}\label{CClClose2}
\int_0^1 v_1 u_\gamma 2\pi r dr=\|u_\gamma\|_2(1+o(1))\,.
\end{equation}  Then, by the equality in \eqref{TestAgainstV1}, \eqref{CClClose1}, \eqref{CClClose2} and our definition of $\varepsilon_\gamma$ below \eqref{DefLambdaGamma}, we get that
\begin{equation}\label{CClClose3}
\|u_\gamma\|_2=\sqrt{\frac{l}{\lambda_1}}+o(1)\,.
\end{equation}
Now, by \eqref{EqSingGamma},  we have that 
\begin{equation}\label{Dir}
\int_\Omega |\nabla u_\gamma|^2 dx=\bar{\lambda}_{\gamma}\int_\Omega u_\gamma^2 dx+\beta_\gamma\int_\Omega u_\gamma^2 \exp(u_\gamma^2) dx,
\end{equation} 
and, using \eqref{ManciniThi}, \eqref{ManciniThi2}, \eqref{NewToProve3} and  \eqref{usefulImproved}, that 
\begin{equation}\label{final}
\begin{split}
\beta_\gamma\int_\Omega u_\gamma^2 \exp(u_\gamma^2) dx = &  \gamma(1+o(1))\beta_\gamma\int_0^{\rho_{1,\gamma}} \exp(u_\gamma^2) u_\gamma 2\pi r dr \\& +  O\left( \gamma\int_{\rho_{1,\gamma}}^{\rho_\gamma} \Delta u_\gamma 2\pi r dr \right) + o\left(\frac{\|u_\gamma\|_2}{\gamma}\right)\\
=& 4\pi +o\left(\frac{1}{\gamma}\right)
\end{split}
\end{equation}
 By \eqref{CClClose2}-\eqref{final}, since ${\bar{\lambda}_{\gamma}}=\lambda_1 +o(1)$ by \eqref{DefLambdaGamma}, we prove that \eqref{QuantifWL} holds true. Moreover, by \eqref{QuantifWL} with \eqref{RhoTo0} and \eqref{NewToProve2},  there must be the case that \eqref{WeakConv} holds true. Thus, Theorem \ref{MainThm} is proved.
\end{proof}

\section{Proof of Theorem \ref{MainThmNod}}\label{SectPfTheorem1GeneralCase}
Let $\Omega\subset \mathbb{R}^2$ be the unit disk centered at $0$. Let $\lambda_1,\lambda_2, \ldots $ and $v_1,v_2,\ldots$ be as above Theorem \ref{MainThm}. By Bessel functions' theory, $v_1$ extends in a unique way to a radial function $\bar{v}_1$, satisfying $\Delta \bar{v}_1=\lambda_1 \bar{v}_1$ in $\mathbb{R}^2$. It is known that $\bar{v}_1$ vanishes exactly for $|x|=r_n:=\sqrt{\lambda_n/\lambda_1}$ with $\bar{v}_1'(r_n)\neq 0$ (see \eqref{NotationRadial}) for any $n\ge 1$. Moreover, we have that $\bar{v}_n:=\bar{v}_1(r_n\cdot)$ is proportional to $v_n$ in $\Omega$, namely
\begin{equation}\label{DefAlphak}
v_n = \frac{r_n}{\sqrt{\alpha_n}} \bar v_1(r_n\cdot) \quad \text{ where } \quad \alpha_n = \int_{B_0(r_n)} \bar v_1^2 dx\,,
\end{equation}
for all integer $n\ge 1$. Let $l>0$ be a fixed real number and $k\ge 2$ be a fixed integer. Let us define 
\begin{equation}\label{DefLambdaGammak}
\tilde \lambda_{\gamma}:=\lambda_1-\frac{4\pi v_1(0)}{\gamma}\sqrt{\frac{\lambda_1}{\tilde l}} \quad \text{with} \quad \tilde l = \frac{l}{ \alpha_k}\,.
\end{equation}
By Theorem \ref{MainThm} (with $l=\tilde l$) for all $\gamma$ large enough, there exist $\tilde \beta_{\gamma}>0$ and a smooth radial function $\tilde u_{\gamma}$ such that
\begin{equation}
\begin{cases}
\Delta \tilde u_{\gamma} =\tilde  \lambda_{\gamma} \tilde u_{\gamma} +\tilde \beta_{\gamma}  \tilde u_{\gamma} \exp(\tilde u_{\gamma}^2),  \tilde u_{\gamma}>0 \quad \text{ in }\Omega\,,\\
\tilde u_{\gamma} = 0 \text{ on }\partial \Omega\,,\\
\tilde u_\gamma(0)=\gamma\,,
\end{cases}
\end{equation}
and we have  
\begin{equation}\label{C1ConvLastPart}
\tilde \beta_{\gamma}=o(1)\text{ and } \tilde u_{\gamma} \to \tilde u_\infty:=v_1 \sqrt{\frac{\tilde l}{\lambda_1}}\text{ in }C^1_{loc}(\bar{\Omega}\backslash\{0\})\cap L^2(\Omega)\,,
\end{equation}
 as $\gamma\to +\infty$. Moreover, as discussed in Lemma \ref{ExistRes}, we have that $\tilde u_{\gamma}$ is globally defined on $\R^2$ and, as a consequence of \eqref{C1ConvLastPart} and standard ODE theory, we find that 
\begin{equation}\label{ConvExtended}
\tilde u_{\gamma} \to \bar{v}_1 \sqrt{{\tilde l}/{\lambda_1}} \quad \text{ in } \quad C^1_{loc}(\mathbb{R}^2\backslash\{0\})\cap L^2_{loc}(\R^2).
\end{equation} 
Besides, by the implicit function theorem and ODE theory, there exists $\varepsilon_0>0$ and $\bar{\gamma}\gg 1$ large such that $\tilde u_{\gamma}$ vanishes exactly once in $I_k:=(r_k-\varepsilon_0, r_k+\varepsilon_0)$ at some $r_{k,\gamma}$,  for all  $\gamma\ge \bar{\gamma}$ (indeed $\tilde u_\gamma$ vanishes exactly $k-$times in $[0,r_k+\eps_0)$ for $\gamma\gg1$). By construction, we also have that $r_{k,\gamma}=r_k+o(1)$ as $\gamma\to+\infty$. Setting $u_{\gamma}:=\tilde u_{\gamma}(r_{k,\gamma}\cdot)$, we get from the above discussion that $u_{\gamma}$ solves \eqref{EqSingGammaNod} with $\beta_{\gamma}=r_{k,\gamma}^2 \tilde \beta_{\gamma}$ and $\bar{\lambda}_{\gamma}=r_{k,\gamma}^2 \tilde \lambda_{\gamma}=\lambda_k+o(1)$. Besides  \eqref{ConvExtended} implies
 \begin{equation}\label{LocConvFurtherLastPart}
u_{\gamma} \to  \bar{v}_1(r_k\cdot) \sqrt{\frac{\tilde l}{\lambda_1}} = v_k  \sqrt{\frac{l}{\lambda_k}} \ \text{ in }C^1_{loc}(\bar{\Omega}\backslash\{0\})\cap L^2(\Omega)\,,
 \end{equation}
 as $\gamma\to +\infty$. Then, using the invariance of the $L^2$-norm of the gradient under dilation in dimension 2, \eqref{QuantifWL} for $\tilde u_{\gamma}$ if $|x|\le 1/r_{k,\gamma}$, and \eqref{LocConvFurtherLastPart} if $|x|\ge 1/r_{k,\gamma}$, we get that
 \begin{equation}\label{QuantificationThm2}
 \begin{split}
 \int_\Omega |\nabla u_{\gamma}|^2 dx &=\int_\Omega |\nabla \tilde u_{\gamma} |^2 dy+\int_{\Omega\backslash B_0(1/r_{k,\gamma})} |\nabla u_{\gamma}|^2 dx\,,\\
 &=4\pi +\tilde l+\tilde l \int_{B_0(r_k)\backslash \Omega} \bar{v}_1^2 dy+o(1)\,,\\
 &=4\pi +l+o(1)\,,\\
 \end{split}
 \end{equation}
so that \eqref{QuantifWL} holds true.  Clearly  \eqref{LocConvFurtherLastPart}  and \eqref{QuantificationThm2} give also \eqref{WeakConv}. Finally, we shall prove that  \eqref{relEpsGamma} holds. In order to do this, we may multiply \eqref{EqSingGammaNod}  by $v_k$, integrate by parts and use \eqref{LocConvFurtherLastPart} to get
\begin{equation}\label{ConclThm2}
\begin{split}
 \left(\lambda_k-\bar{\lambda}_{\gamma}\right)(1+o(1))\sqrt{\frac{l}{\lambda_k}} & = \left(\lambda_k-\bar{\lambda}_{\gamma}\right)\int_\Omega u_\gamma v_k dy \\
 &=\beta_{\gamma} \int_\Omega u_{\gamma} \exp(u_{\gamma}^2) v_k dy\,.  
\end{split}
\end{equation}
Arguing as in \eqref{usefulImproved}-\eqref{CClClose2} for the $\tilde u_{\gamma}'s$, we find that 
\begin{equation}\label{ConclThm2-part2}
\begin{split}
\beta_{\gamma}\int_{B_0(\frac{1}{r_{k,\gamma}})} u_{\gamma} \exp(u_{\gamma}^2) v_k dy &=\tilde \beta_{\gamma} \int_\Omega \tilde u_{\gamma} \exp(\tilde u_{\gamma}^2) v_k dy   \\ 
&=\frac{4\pi v_k(0)(1+o(1))}{\gamma}\,,
\end{split}
\end{equation}
and that $\tilde \beta_{\gamma}=o(\frac{1}{\gamma})$, so that  \eqref{LocConvFurtherLastPart} implies
\begin{equation}\label{ConclThm2-part3}
\beta_{\gamma}\int_{\Omega\setminus B_0(\frac{1}{r_{k,\gamma}})} u_{\gamma} \exp(u_{\gamma}^2) v_k dy  = o\bra{\frac{1}{\gamma}}. 
\end{equation}
Using \eqref{ConclThm2}-\eqref{ConclThm2-part3} we conclude the proof of \eqref{relEpsGamma}, and  Theorem \ref{MainThmNod} is proved.

\section{Proof of Theorem \ref{Thm3}}
As in Section \ref{SectPfThm1PositiveCase}, we first prove the existence part of Theorem \ref{Thm3}. The argument is similar to the one of \cite[Proof of Theorem 1]{MalchMartJEMS}.

\begin{lem}\label{LemmaExistNew}
Let $g$ be as in Theorem \ref{Thm3}. For any $\gamma >0$ there exist a unique real number $\beta_\gamma>0$ and a unique smooth radial function $u_\gamma$ in $\Omega$  solving 
\begin{equation}\label{EqLemmaThm3}
\begin{cases}
&\Delta u= \beta_\gamma u g(u)\,,~~ u>0\text{ in }\Omega\,,\\
&u=0\text{ on }\partial\Omega\,,\\
& u(0)=\gamma\,.
\end{cases}
\end{equation}
Moreover, we have
\begin{equation}\label{BoundBetaGamma}
0<\beta_\gamma<\lambda_1 \bra{\inf_{[0,+\infty]} g}^{-1}.
\end{equation}
\end{lem}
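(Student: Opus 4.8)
The plan is to mimic the ODE shooting argument used for Lemma \ref{ExistRes}, but now exploiting the fact that $g(u)$ is bounded below by a positive constant (hypothesis \eqref{G2}, together with the explicit form \eqref{G1} which forces $\inf_{[0,+\infty)} g > 0$) to get a clean monotonicity-in-$\beta$ statement, which will also yield uniqueness. First I would fix $\gamma>0$ and, for each $\beta>0$, consider the radial Cauchy problem $\Delta u = \beta u g(u)$, $u(0)=\gamma$, $u'(0)=0$, i.e.\ $-(r u')' = \beta r u g(u)$ on $(0,+\infty)$. As in the proof of Lemma \ref{ExistRes}, standard radial ODE theory gives a unique local solution $u_\beta$; writing the energy $E(u_\beta)(r) = (u_\beta')^2 + 2\beta G(u_\beta)$ with $G' = t g(t)$, which is nonincreasing in $r$ by the equation, one rules out finite-time blow-up and obtains a solution defined on all of $[0,+\infty)$ (here I use $G\ge 0$, which holds since $g>0$). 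The right-hand side being positive wherever $u_\beta>0$, $u_\beta$ is strictly radially decreasing on the interval where it is positive, so exactly as in \eqref{DefRBeta}–\eqref{IntermComput} one shows there is a first zero $R_\beta \in (0,+\infty)$ with $u_\beta'(R_\beta)<0$; continuity of $(\beta,r)\mapsto u_\beta(r)$ then gives that $\beta\mapsto R_\beta$ is continuous on $(0,+\infty)$.

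Next I would establish the two limiting regimes. As $\beta\to 0^+$, the solution $u_\beta$ converges (locally uniformly, by continuous dependence) to the solution of $\Delta u = 0$, $u(0)=\gamma$, $u'(0)=0$, which is the constant $\gamma$; hence $R_\beta\to +\infty$, and in particular $R_\beta>1$ for small $\beta$. As $\beta\to +\infty$, I would test the equation against the first Dirichlet eigenfunction $\tilde v_{R_\beta}$ of $B_0(R_\beta)$ exactly as in the proof of Lemma \ref{ExistRes}: using $g(u_\beta)\ge \inf_{[0,+\infty)} g =: m_0 > 0$ one gets
\[
\tilde\lambda_{R_\beta}\int_{B_0(R_\beta)} u_\beta \tilde v_{R_\beta}\, dy
= \int_{B_0(R_\beta)} \beta g(u_\beta) u_\beta \tilde v_{R_\beta}\, dy
\ge \beta m_0 \int_{B_0(R_\beta)} u_\beta \tilde v_{R_\beta}\, dy,
\]
so $\tilde\lambda_{R_\beta} = \lambda_1/R_\beta^2 \ge \beta m_0$, forcing $R_\beta\to 0$. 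By the intermediate value theorem there is $\beta_\gamma>0$ with $R_{\beta_\gamma}=1$, and rescaling back gives a solution of \eqref{EqLemmaThm3}; moreover the same test-function inequality applied at $R_{\beta_\gamma}=1$ gives $\lambda_1 = \tilde\lambda_1 \ge \beta_\gamma m_0$, i.e.\ the bound \eqref{BoundBetaGamma}.

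The remaining point, and the one I expect to be the genuine (though not hard) obstacle, is \emph{uniqueness} of the pair $(\beta_\gamma, u_\gamma)$, which the statement asserts but the analogue Lemma \ref{ExistRes} does not claim. For fixed $\gamma$ the map $\beta\mapsto R_\beta$ should in fact be strictly decreasing, so that $R_\beta=1$ has a unique root. To see this I would compare $u_{\beta_1}$ and $u_{\beta_2}$ for $\beta_1<\beta_2$: setting $w = u_{\beta_1}-u_{\beta_2}$, one has $w(0)=0$, $w'(0)=0$, and on the common interval where both stay positive $\Delta w = \beta_1 u_{\beta_1} g(u_{\beta_1}) - \beta_2 u_{\beta_2} g(u_{\beta_2})$; since $t\mapsto t g(t)$ is positive and, by \eqref{G1}, eventually increasing, a Sturm-type / sliding comparison (or a Gronwall argument on the Wronskian-like quantity $r(u_{\beta_1}' u_{\beta_2} - u_{\beta_1} u_{\beta_2}')$, whose derivative is $r(\beta_2 g(u_{\beta_2}) - \beta_1 g(u_{\beta_1})) u_{\beta_1} u_{\beta_2} > 0$ near $0$) shows $u_{\beta_2}$ decays strictly faster, hence $R_{\beta_2}<R_{\beta_1}$; in particular $u_{\beta_2}$ cannot stay positive past $R_{\beta_1}$, and strict monotonicity of $\beta\mapsto R_\beta$ follows. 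This forces uniqueness of $\beta_\gamma$, and uniqueness of $u_\gamma$ is then immediate from uniqueness of the Cauchy problem. (The only delicate bookkeeping is making sure $\inf g>0$ is used legitimately and that the comparison is run only on the interval of mutual positivity; everything else is routine.)
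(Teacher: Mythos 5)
Your existence argument is fine (and essentially standard), but you missed the key structural observation that makes the paper's proof both shorter and, more importantly, makes the \emph{uniqueness} claim trivial rather than delicate: because the nonlinearity $u g(u)$ has no accompanying linear term $\lambda u$, the equation $\Delta u = \beta u g(u)$ is scaling-covariant in $\beta$. Concretely, if $w_\gamma$ solves the Cauchy problem $\Delta w = w g(w)$, $w(0)=\gamma$ on $\mathbb R^2$ and $R_\gamma$ is its first zero, then by uniqueness of the radial ODE one has $u_\beta(r) = w_\gamma(\sqrt{\beta}\,r)$, hence $R_\beta = R_\gamma/\sqrt\beta$ \emph{exactly}. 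No shooting in $\beta$ is needed: the paper directly sets $u_\gamma := w_\gamma(R_\gamma\cdot)$ and reads off $\beta_\gamma = R_\gamma^2$. Uniqueness follows instantly, since any $(\bar\beta,\bar u)$ solving \eqref{EqLemmaThm3} forces $\bar u(\cdot/\sqrt{\bar\beta}) = w_\gamma$, and positivity in $\Omega$ plus vanishing on $\partial\Omega$ pins $\sqrt{\bar\beta}$ as the first zero of $w_\gamma$, i.e.\ $\bar\beta = R_\gamma^2$.

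The Sturm/Wronskian route you sketch for uniqueness has a genuine gap. Your quantity $W = r(u_{\beta_1}'u_{\beta_2} - u_{\beta_1}u_{\beta_2}')$ satisfies $W' = r\,u_{\beta_1}u_{\beta_2}\bigl(\beta_2 g(u_{\beta_2}) - \beta_1 g(u_{\beta_1})\bigr)$, which indeed starts positive near $0$; but once the solutions separate ($u_{\beta_2}<u_{\beta_1}$), the sign of $\beta_2 g(u_{\beta_2}) - \beta_1 g(u_{\beta_1})$ is uncontrolled: on $[c_0,\infty)$ the function $g(t) = e^{t^2-at}$ is eventually \emph{increasing}, so $g(u_{\beta_2}) < g(u_{\beta_1})$ works \emph{against} $\beta_2>\beta_1$, and on $[0,c_0]$ the hypotheses \eqref{G1}--\eqref{G2} impose no monotonicity on $g$ at all. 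So "eventually increasing" is not a usable ingredient here, and the sliding/Sturm comparison does not close. Of course the monotonicity $\beta\mapsto R_\beta$ that you want is true — but the reason is precisely the scaling identity $R_\beta = R_\gamma/\sqrt\beta$, not a comparison principle, and once you notice that, the shooting argument itself becomes superfluous. (Your derivation of the bound \eqref{BoundBetaGamma} by testing against $v_1$ matches the paper.)
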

\begin{proof}
Let $f:\R\rightarrow \R$ be the continuous odd extension of the function $t\mapsto t g(t)$, $t\ge 0$. For any $\gamma >0$, let $w_\gamma$ be the solution the ODE
\begin{equation}
\begin{cases}\label{Prob}
&\Delta w_\gamma = f(w_\gamma)\,,\\
&w_\gamma(0)=\gamma\,.
\end{cases}
\end{equation}
Arguing as in the proof of Lemma \ref{ExistRes}, we get that $w_\gamma$ is defined on $\R^2$, since the function $E(w_\gamma)= \frac{1}{2}|w_\gamma'|^2 + F(w_\gamma)$ with $F(t)=\int_0^{t} f(s)ds$ is nonincreasing in the existence interval for $w_\gamma$, and since $F(t)\to+\infty$ as $t\to +\infty$. Let now \begin{equation}
R_\gamma=\sup\left\{r>0\,\text{ s.t. }w_\gamma>0\text{ in }[0,r] \right\}\,.
\end{equation} 
Clearly  $R_\gamma>0$ and for any fixed $\eps_\gamma\in (0,R_\gamma)$ and and $r\in [\eps_\gamma,R_\gamma)$ have  
\begin{equation}
- r w_\gamma'(r)=\int_0^r (\Delta w_\gamma)(s) s ds\ge \int_0^{\eps_\gamma}f(w_\gamma(s))s ds>0\,.
\end{equation}
Using the fundamental theorem of calculus as in \eqref{IntermComput} we get that $R_\gamma<+\infty$, so that  $R_\gamma$ is the first zero of $w_\gamma$. Then, the function $u_\gamma= w_\gamma(R_\gamma\cdot )$ satisfies \eqref{EqLemmaThm3} with $\beta_\gamma = R_\gamma^2$. Multiplying the equation in \eqref{EqLemmaThm3} by $v_1$ and integrating by parts, we get that
$$
\lambda_1 \int_{\Omega} v_1 u_\gamma dx = \beta_\gamma \int_{\Omega}v_1 u_\gamma g(u_\gamma) dx \ge \beta_\gamma {\left(\inf_{[0,+\infty]}g \right)} \int_{\Omega} v_1 u_\gamma dx\,,
$$ which gives \eqref{BoundBetaGamma}. Note that {\eqref{G1} and \eqref{G2} imply} $\inf_{[0,+\infty]}g>0$. Finally, we observe that $\beta_\gamma$ and $u_\gamma$ are uniquely determined by $\gamma$ and $g$. Indeed if $\bar \beta>0$ and $\bar u\in C^1(\bar \Omega)$ are such that
$$
\begin{cases}
\Delta \bar u = \bar \beta \bar u g(\bar u)\,, \quad  \bar u >0,  \text{ in  }\Omega\\
\bar u = 0 \quad \text{ on }\partial \Omega,\\
\bar u(0)=\gamma,\\
\bar u \text{ is radially symmetric in } \text{ in  }\Omega\,,
\end{cases}
$$ 
then uniqueness theory for solution of ODEs implies  $\bar u(\frac{\cdot}{ \sqrt{\bar\beta}}) = w_\gamma$. In particular $\bar \beta= \beta_\gamma = R_\gamma^2$ and $\bar u = u_\gamma$. 
\end{proof}

For any $\gamma>0$ let $\beta_\gamma$ and $u_\gamma$ be as in Lemma \ref{LemmaExistNew}. As in Section \ref{SectPfThm1PositiveCase} we shall rescale around 0 and prove that the $u_\gamma$'s are close to a Moser-Trudinger bubble up to a sufficiently large scale. However, here more precise expansions (see \eqref{UpToRhoGamma} and \eqref{SecondExpansion}) are needed in order to detect the effect of the term $e^{-a u}$ on the shape of such bubble. 

Let us define $\mu_\gamma>0$ such that 
\begin{equation}\label{DefMuG}
\mu_\gamma^2 \beta_\gamma \gamma^2  g(\gamma)=4.
\end{equation} Note that 
\begin{equation}\label{MuTo0G}
\mu_\gamma\to 0 \quad \text{ as } \quad \gamma\to +\infty.
\end{equation} 
Otherwise, by \eqref{DefMuG}, we could find a subsequence such that $\beta_\gamma\gamma^2g(\gamma)=O(1)$. Since $u_\gamma \le u_\gamma(0)= \gamma$ in $\Omega$ and since $g(\gamma)\to+\infty$ as $\gamma\to +\infty$, the assumptions \eqref{G1} and \eqref{G2} imply that $g(u_\gamma)\le g(\gamma)$ for large values of $\gamma$.  Hence,  we would have  
$$
\Delta u_\gamma \le  \beta_\gamma \gamma g(\gamma)= O(\gamma^{-1}),
$$
which contradicts $u_\gamma (0)=\gamma \to +\infty$ by standard elliptic estimates.
Now let $\tau_\gamma$ be defined as in \eqref{DefTauGamma} with $\mu_\gamma$ as in \eqref{DefMuG}. Then, we will show that 
$$
\tau_\gamma \to T_0 \qquad \text{ and } \qquad \gamma(\tau_\gamma -T_0)\to  a S_0 \quad \text{ in }C^{2}_{loc}(\R^{2}),
$$
where $T_0$ is as in \eqref{ConvergenceLocale} and 
\begin{equation}\label{DefS0}
S_0(x) = -\frac{1}{2} T_0(x) + \frac{1}{2} \frac{|x|^2}{1+|x|^2}\,.
\end{equation}
Note that  we have
\begin{equation}\label{EqT0}
\Delta T_0  + 4\exp(-2T_0)=0  \quad \text{ in }\R^2\,,
\end{equation}
and  \begin{equation}\label{EqS0}
\Delta S_0 -8 \exp(-2T_0) S_0 = 4T_0 \exp(-2T_0)  \quad \text{ in }\R^2\,.
\end{equation}
More precisely, setting 
\begin{equation}\label{DefTSGamma}
t_\gamma  = T_0 \left( \frac{\cdot}{\mu_\gamma }\right) \qquad \text{ and }\qquad S_\gamma  = S_0 \left( \frac{\cdot}{\mu_\gamma }\right),
\end{equation}
and letting $\rho_\gamma>0$ be defined by
\begin{equation}\label{DefRhoGammaG}
t_\gamma (\rho_\gamma)=\frac{\gamma^2}{2},
\end{equation}
we get the following expansion in $[0,\rho_\gamma]$. 

\begin{Step}\label{ExpUpToRhoGamma}  As $\gamma\to+\infty$, we have that 
\begin{equation}\label{UpToRhoGamma}
u_\gamma = \gamma -\frac{t_\gamma}{\gamma} + \frac{a S_\gamma}{\gamma^2}  + O\left(\frac{t_\gamma}{\gamma^3} \right)\qquad \text{ in }[0,\rho_\gamma]\,,
\end{equation}
with $T_\gamma$ and $S_\gamma$ as in \eqref{DefTSGamma}. Moreover, we have that 
\begin{equation}\label{LapUpToRhoGamma}
\int_{B_0(\rho_\gamma)} \Delta u_\gamma dy = \frac{4\pi}{\gamma}+\frac{2\pi a }{\gamma^2} + O\left(\frac{1}{\gamma^3}\right),
\end{equation}
and that 
\begin{equation}\label{ExpEnergy1}
\int_{B_0(\rho_\gamma)} u_\gamma \Delta u_\gamma dy = 4\pi +o(1)\,.
\end{equation}
\end{Step}
\begin{proof}
The proof of Step \ref{ExpUpToRhoGamma} is similar to the one of \cite[Step 3.2]{ThiMan}.  Observe that \eqref{G1}, \eqref{BoundBetaGamma} and \eqref{DefMuG} imply that $\mu_\gamma^{-2} = O(\gamma^2 \exp(\gamma^2))$. Then, we get 
\begin{equation}\label{BoundTGamma}
t_\gamma(r)\le \log\bra{1+\frac{1}{\mu_\gamma^2}} =  O(\gamma^2) \, , 
\end{equation}
for any $r\in [0,1]$. This will be used several times in the sequel. 

 Let $w_\gamma$ be defined by 
\begin{equation}\label{DefWG}
u_\gamma = \gamma -\frac{t_\gamma}{\gamma}+ \frac{a w_\gamma}{\gamma^2},
\end{equation} 
and let
\begin{equation}\label{DefRhoGamma'}
\rho_\gamma' = \sup\{r \in [0,\rho_{\gamma}] \;:\; \left|w_\gamma-S_\gamma \right|\le 1+t_\gamma  \}. 
\end{equation}
First, by   \eqref{DefRhoGammaG} and \eqref{BoundTGamma}-\eqref{DefRhoGamma'}, one has that  $u_\gamma \ge \frac{\gamma}{2} + O(1)$  in $[0,\rho_\gamma']$, so that \eqref{G1} gives $g(u_\gamma) = \exp(u_\gamma^2-a u_\gamma)$ for $\gamma$ sufficiently large.  Moreover, \eqref{BoundTGamma}, \eqref{DefWG} and \eqref{DefRhoGamma'} imply that 
\begin{equation}\label{Exponent}
\begin{split}
u_\gamma^2 - a u_\gamma 
&= \gamma^2- a\gamma  -2t_\gamma  + \psi_\gamma + O\bra{\frac{1+t_\gamma}{\gamma^2}},
\end{split}
\end{equation}
in $[0,\rho_\gamma']$, where
$$
\psi_\gamma = a \frac{2w_\gamma+t_\gamma}{\gamma}+\frac{t_\gamma^2}{\gamma^2} -\frac{2a t_\gamma w_\gamma}{\gamma^3}\,.
$$ 
Using the simple inequality 
$|e^{x}-1-x|\le e^{|x|} |x|^2$ for $x\in \mathbb{R}$, we get that 
\begin{equation}\label{Error}
\begin{split}
\exp\bra{\psi_\gamma}& =  1 + \psi_\gamma +  O\bra{\exp\bra{|\psi_\gamma|}\psi_\gamma^2}\\
&=  1+ a\frac{2w_\gamma+t_\gamma}{\gamma} + O\bra{\frac{1+t_\gamma^2}{\gamma^2}} + O\bra{\exp(|\psi_\gamma|) \frac{1+t_\gamma^4}{\gamma^2}} \\
&=  1+ a\frac{2w_\gamma+t_\gamma}{\gamma}  + O\bra{\exp(|\psi_\gamma|) \frac{1+t_\gamma^4}{\gamma^2}}\,,
\end{split}
\end{equation}
in $[0,\rho_\gamma']$. Since $u_\gamma =  \gamma \left( 1+O\left(\frac{1+{t_\gamma}}{\gamma^2}\right)   \right)$ by \eqref{BoundTGamma}, \eqref{DefWG} and \eqref{DefRhoGamma'}, using \eqref{EqLemmaThm3}, \eqref{BoundTGamma}, \eqref{Exponent}, \eqref{Error} and the definition of $\mu_\gamma$ in \eqref{DefMuG}, we obtain that 
\begin{equation}\label{ExpLap}\begin{split}
\Delta u_\gamma & =  {\beta_\gamma \gamma g(\gamma)} \exp(-2t_\gamma+\psi_\gamma) \left( 1+O\left(\frac{1+{t_\gamma}}{\gamma}\right)   \right) \\  
&=\frac{4}{\gamma\mu_\gamma^2} \exp\bra{-2t_\gamma} \bra{ 1 + a\frac{2w_\gamma+t_\gamma}{\gamma} + O\bra{\exp(|\psi_\gamma|) \frac{1+t_\gamma^4}{\gamma^2}} }\,,
\end{split}
\end{equation} 
in $[0,\rho_\gamma']$. In particular,   \eqref{EqT0},  \eqref{EqS0}, \eqref{DefWG} and \eqref{ExpLap} yield
\begin{equation}\label{EqDiff}
\Delta (w_\gamma - S_\gamma) =\frac{4}{ \mu_\gamma^2 }\exp(-2t_\gamma)\bra{ 2(w_\gamma-S_\gamma) + O\bra{\exp(|\psi_\gamma|) \frac{1+t_\gamma^4}{\gamma}}}.
\end{equation}
Note that, since $t_\gamma\le \frac{\gamma^2}{2}$ in $[0,\rho_\gamma']$ by \eqref{DefRhoGammaG}, we get
\[
\begin{split}
-2t_\gamma+|\psi_\gamma |  &= t_\gamma \left(-2  + \frac{t_\gamma}{\gamma^2} + O\left(\frac{1}{\gamma}\right)\right) + o(1)\,,\\
 & \le  t_\gamma \left(-\frac{3}{2} + O\left(\frac{1}{\gamma}\right)\right) +o(1)\,.
\end{split}
\]
Hence, there exists $\kappa>1$ such that 
\begin{equation}\label{kappa}
(1+t_\gamma^4)\exp(-2t_\gamma+|\psi_\gamma|) = O(\exp(-\kappa t_\gamma))\quad \text{ in }[0,\rho_\gamma']\,,
\end{equation}
for sufficiently large $\gamma$. By \eqref{EqDiff} and \eqref{kappa}, we can find $C_1>0$ such that 
\[\begin{split}
r|(w_\gamma-S_\gamma)'(r)| &\le  \int_{0}^r   |\Delta(w_\gamma - S_\gamma)| s ds \,, \\
& \le  \frac{8  \|w_\gamma'-S_\gamma'\|_{L^\infty([0,\rho_\gamma'])  } }{ \mu_\gamma^2 } \int_{0}^{r} \exp(-2t_\gamma)  s^2  ds  \\ 
& \quad \quad + \frac{1}{\mu^2_\gamma\gamma }\int_{0}^r O(\exp(-\kappa t_\gamma (s)))s\, ds \,, \\
& \le   C_1  r \frac{\frac{r^2}{\mu^2_\gamma}}{1+\frac{r^3}{\mu_\gamma^3}} \|w_\gamma'-S_\gamma'\|_{L^\infty([0,\rho_\gamma'])}   + \frac{C_1}{\gamma} \frac{\frac{r^2}{\mu_\gamma^2}}{1+\frac{r^2}{\mu_\gamma^2}},
\end{split}
\]
for any $r\in [0,\rho_\gamma']$.  Therefore we have that
\begin{equation}\label{EstDer}
|(w_\gamma-S_\gamma)'(r)|\le \frac{C_2 \frac{r}{\mu_\gamma}}{1+ \frac{r^2}{\mu_\gamma^2}}\left(  \|w_\gamma'-S_\gamma'\|_{L^\infty([0,\rho_\gamma'])} + \frac{1}{\mu_\gamma \gamma} \right),
\end{equation}
for some constant $C_2>0$. We claim now that
\begin{equation}\label{ToProve}
\begin{split}
\|(w_\gamma-S_\gamma)'\|_{L^\infty([0,\rho_\gamma'])} =O\left(\frac{1}{\mu_\gamma \gamma}\right). 
\end{split}
\end{equation}
Otherwise there exists $0<\rho_\gamma''\le \rho_\gamma'$ such that 
\begin{equation}\label{Contr}
\begin{split}
\mu_\gamma \gamma \|(w_\gamma-S_\gamma)'\|_{L^\infty([0,\rho_\gamma'])} = \mu_\gamma \gamma  |(w_\gamma- S_\gamma)'(\rho_\gamma'')|\to +\infty. 
\end{split}
\end{equation}
Note that \eqref{EstDer} and \eqref{Contr} imply $\rho_{\gamma}''=O(\mu_\gamma)$  and $\mu_\gamma = O(\rho_\gamma'')=O(\rho_\gamma')$, so that, up to a subsequence we get 
\begin{equation}\label{Compare}
\frac{\rho_\gamma''}{\mu_{\gamma}}\to \delta\in (0,+\infty) \quad\text{ and }\quad \frac{\rho_{\gamma}'}{\mu_\gamma} \to \delta_0\in [\delta,+\infty]\,.
\end{equation} 
We define 
$$
z_\gamma = \frac{(w_\gamma -S_\gamma)(\mu_\gamma \cdot)}{\mu_\gamma \|(w_\gamma-S_\gamma)'\|_{L^\infty([0,\rho_\gamma'])} }\,,
$$ 
so that we have $|z_\gamma'|\le 1$ in $[0,\frac{\rho_\gamma'}{\mu_\gamma}]$. Using \eqref{Compare}, $z_\gamma(0)=0$ the fundamental theorem of calculus, and then the ODE \eqref{EqDiff} and \eqref{Contr}, we observe that 
$$\left(\|z_\gamma\|_{L^\infty\left(\left[0,{\rho_\gamma''}/{{\mu_\gamma}}\right]\right)}\right)_\gamma \text{ and }\left(\|\Delta z_\gamma\|_{L^\infty\left(\left[0,{\rho_\gamma''}/{{\mu_\gamma}}\right]\right)}\right)_\gamma$$
 are both bounded sequences.  Then by radial elliptic estimates, up to a subsequence we do not only get that $z_\gamma \to z_0$ in $C^{1}_{loc}({B_0(\delta_0)})$, but we also get that $z_\gamma'(\rho_\gamma'')\to z_0'(\delta)$ as $\gamma\to +\infty$, where $z_0\in C^1(\overline{B_0(\delta)})$ satisfies
$$
\begin{cases}
\Delta z_0 -8 \exp(-2T_0)z_0=0\text{ in } B_0(\delta_0)\cap \overline{B_0(\delta)} \,, \\
z_0(0)=0\,.
\end{cases}
$$ 
But, since $z_0$ is radially symmetric, we get $z_0\equiv 0$ and then $z_\gamma'(\rho_\gamma'')\to 0$. But this contradicts our definition of $\rho_\gamma''$  in \eqref{Contr}, which gives that $|z_\gamma'(\rho_\gamma'')|=1$. Hence, \eqref{Contr} cannot hold  and  \eqref{ToProve} is proved.  Then \eqref{EstDer} reads as 
$$
|(w_\gamma-S_\gamma)'|\le \frac{2C_{3}}{\mu_\gamma\gamma} \frac{\frac{r}{\mu_\gamma}}{1+ \frac{r^2}{\mu_\gamma^2}} = \frac{C_{3}}{\gamma} t_\gamma'(r)\,,
$$
in  $[0,\rho_\gamma']$. In particular, using the fundamental theorem of calculus, we get  $w_\gamma - S_\gamma = O(\frac{t_\gamma}{\gamma})$  in  $[0,\rho_\gamma']$, so that  $\rho_\gamma'=\rho_\gamma$ for large $\gamma$ and \eqref{UpToRhoGamma} holds.  Finally, \eqref{LapUpToRhoGamma} and \eqref{ExpEnergy1} follow from \eqref{ExpLap}, \eqref{kappa} and 
\begin{equation}\label{Scales}
\rho_\gamma^2 = \mu_\gamma^2 \left(\exp\left(\frac{\gamma^2}{2}\right) -1\right). 
\end{equation}   
\end{proof}

Note that the expansion  in \eqref{UpToRhoGamma}, \eqref{DefS0}, and \eqref{Scales} imply that   
\begin{equation}\label{AtRhoGamma}
u_\gamma(\rho_\gamma)= \gamma -\frac{t_\gamma(\rho_\gamma)}{\gamma}\bra{1+\frac{a}{2\gamma}} + o\bra{\frac{t_\gamma(\rho_\gamma)}{\gamma^2}},
\end{equation}
as $\gamma\to +\infty$. Let us now fix any $c_1>\max\{c_0,\frac{a}{2}\}$, where $c_0$ is as in \eqref{G1}.  Let  $r_\gamma>0$ be such that  
\begin{equation}\label{DefRGamma}
u_\gamma(r_\gamma)= c_1.
\end{equation}
We shall prove that an expansion similar to \eqref{AtRhoGamma}  holds uniformly in $[\rho_\gamma,r_\gamma]$.

\begin{Step}\label{LemmaImprovedExpansion}
As $\gamma\to +\infty$, we have that
\begin{equation}\label{SecondExpansion}
u_\gamma = \gamma -\frac{t_\gamma}{\gamma}\bra{1+\frac{a}{2\gamma}} + o\bra{\frac{t_\gamma}{\gamma^2}}
\end{equation}
uniformly in $[\rho_\gamma,r_\gamma]$.  Moreover, there exists $\delta>0$ such that
\begin{equation}\label{LapInter}
\int_{B_0(r_\gamma)\setminus B_0(\rho_\gamma)} \Delta u_\gamma dy = O(\exp(-\delta \gamma)). 
\end{equation}
\end{Step}
\begin{proof} Let us denote $\alpha_\gamma = \bra{1+\frac{a}{2\gamma}}$ and  $ \mathcal T_\gamma  = \alpha_\gamma t_\gamma$. Arguing as in \eqref{BoundTGamma}, we get that 
\begin{equation}\label{BoundTauGamma}
\mathcal T_\gamma=O(\gamma^2)
\end{equation}
in $[0,1]$. For any  fixed $\eta \in(0,1)$ we set 
$$
r_{\gamma}'  = \sup\left\{ r \in [\rho_\gamma,r_\gamma] \;:\;  \left| u_\gamma -\bra{ \gamma -\frac{\mathcal{T}_\gamma}{\gamma}}\right|\le \frac{\eta \mathcal{T}_\gamma}{\gamma^2}   \right \}. 
$$
Note that $r_\gamma'>\rho_\gamma$ by \eqref{AtRhoGamma}. We claim that there exists $\delta>0$ such that 
\begin{equation}\label{Claim}
\int_{\rho_\gamma}^{r_\gamma'} \Delta u_\gamma 2\pi r \, dr = O\left(\exp\left(-\frac{\delta}{2} \gamma\right)\right)\,. 
\end{equation}
As in the proof of Step \ref{ExpUpToRhoGamma}, we write 
\begin{equation}\label{DefPhiGamma}
u_\gamma = \gamma  - \frac{\mathcal{T}_\gamma}{\gamma} + \frac{\ph_\gamma}{\gamma^2} \quad \text{ with }|\ph_\gamma|\le \eta\mathcal{T}_\gamma \quad \text{ in } [\rho_\gamma,r_\gamma']\,.
\end{equation}
In particular, using \eqref{BoundTauGamma} and \eqref{DefPhiGamma}, we can write   
$$
u_\gamma^2-a u_\gamma \le  \gamma^2 -a \gamma - 2\mathcal{T}_\gamma +\frac{a\mathcal{T}_\gamma}{\gamma}+ \frac{\mathcal{T}_\gamma^2}{\gamma^2}  + C_1 \frac{\eta \mathcal{T}_\gamma }{\gamma}+C_2\,,
$$
where $C_1$,$C_2>0$ do not depend on the choice of $\eta$. Since $c_1>c_0$  and $u_\gamma$ is radially decreasing,  \eqref{G1}, \eqref{EqLemmaThm3} and \eqref{DefMuG}  imply that 
\[\begin{split}
\Delta u_\gamma & =  \beta_\gamma  u_\gamma \exp(u_\gamma^2-a u_\gamma)\,, \\ 
&\lesssim \beta_\gamma \gamma \exp(\gamma^2-a \gamma) \exp\bra{-2\mathcal{T}_\gamma +\frac{a\mathcal{T}_\gamma}{\gamma}+ \frac{\mathcal{T}_\gamma^2}{\gamma^2}+ C_1 \eta \frac{\mathcal{T}_\gamma}{\gamma} }\,, \\
&\lesssim  \frac{4}{\mu_\gamma^2 \gamma}  \exp\bra{-2\mathcal{T}_\gamma +\frac{a\mathcal{T}_\gamma}{\gamma}+ \frac{\mathcal{T}_\gamma^2}{\gamma^2}+ C_1 \eta \frac{\mathcal{T}_\gamma}{\gamma}}\,.
\end{split}
\]
Integrating in the interval  $[\rho_\gamma,r_\gamma']$  and using the change of variable $\tau = \mathcal{T}_\gamma(r) $ so that 
$$  
\frac{ r \, dr}{\mu_\gamma^2} = \frac{\exp(\frac{\tau}{\alpha_\gamma})d\tau }{2 \alpha_\gamma} \,,
$$
we get that
\[\begin{split}
\int_{\rho_\gamma}^{r_\gamma'} \Delta u_\gamma 2\pi r \, dr & \lesssim\int_{\tau(\rho_\gamma)}^{\tau(r_\gamma')} \frac{1}{\gamma}  \exp\bra{-2\tau +\frac{a\tau}{\gamma}+ \frac{\tau^2}{\gamma^2}  + C_1\frac{\eta\tau }{\gamma}  + \frac{\tau}{\alpha_\gamma} }  d\tau\,. 
\end{split}
\] 
Since $\frac{1}{\alpha_\gamma} = 1-\frac{a}{2\gamma} +O(\frac{1}{\gamma^2})$ and $\frac{\tau}{\gamma^2} \le \frac{\mathcal T(r_\gamma')}{\gamma^2}=O(1)$ by \eqref{BoundTauGamma}, we find that 
\begin{equation}\label{Integral}
\begin{split}
\int_{\rho_\gamma}^{r_\gamma'} \Delta u_\gamma 2\pi r \, dr 
&\lesssim \frac{1}{\gamma}  \int_{\tau(\rho_\gamma)}^{\tau(r_\gamma')} \exp\bra{ -\tau \bra{ 1- \frac{a}{2\gamma} -\frac{\tau}{\gamma^2} {-}\frac{C_1 \eta}{\gamma}}} d\tau\,.
\end{split}
\end{equation}
Now, by definition of $r_\gamma$ and $r_\gamma'$, we know that
$$
c_1 =u_\gamma(r_\gamma)\le  u_\gamma(r_\gamma') \le  \gamma - \frac{\mathcal T_\gamma (r_\gamma')}{\gamma} + \eta \frac{\mathcal{T}_\gamma(r_\gamma')}{\gamma^2} \le  \gamma - \frac{\mathcal T_\gamma (r_{\gamma}')}{\gamma} + C_2 \eta\,.
$$
Since $c_1>\frac{a}{2}$ and $C_1,C_2>0$ do not depend on $\eta$,  we can find $\delta>0$ such that 
$$
1- \frac{a}{2\gamma} -\frac{\tau}{\gamma^2} -\frac{C_1 \eta}{\gamma}  \ge  \bra{c_1-\frac{a}{2}{-}\eta(C_1+C_2)} \frac{1}{\gamma}\ge \frac{\delta}{\gamma}\,,
$$
for any $\tau \le \mathcal T_\gamma (r_{\gamma}')$ and any sufficiently small $\eta$. 
Thus, \eqref{Integral} implies that
\[
\begin{split}
\int_{\rho_\gamma}^{r_\gamma'} \Delta u_\gamma 2\pi r \, dr & \lesssim \frac{1}{\gamma}  \int_{\tau({\rho}_\gamma)}^{\tau({r}_\gamma')} \exp\bra{ -\tau \frac{\delta}{\gamma}} d\tau\,, \\ 
& {\lesssim}~ \frac{1}{\gamma}  \int_{\frac{\gamma^2}{2}}^{\infty} \exp\bra{ -\tau \frac{\delta}{\gamma}} d\tau \,,
\\ & = O\left( \exp\left(-\frac{\delta}{2}\gamma\right)\right),
\end{split}
\]
where we have also used that $\mathcal{T}_\gamma (\rho_\gamma)= \frac{\alpha_\gamma \gamma^2}{2}\ge \frac{\gamma^2}{2}$. This completes the proof of \eqref{Claim}.
Now, observe that \eqref{LapUpToRhoGamma} and \eqref{Claim} imply that
$$
\int_{B_0(r)}\Delta u_\gamma \, dy = \int_{B_0(\rho_\gamma)} \Delta u_\gamma \, dy + O\left(  \exp\left(-\frac{\delta}{2} \gamma\right)\right) = \frac{4\pi}{\gamma} \alpha_ \gamma + O\bra{\frac{1}{\gamma^3}},
$$
for any $r\in [\rho_\gamma,r_\gamma']$. Moreover, by \eqref{DefTSGamma} and \eqref{Scales}, we have that
\[
\begin{split}
\int_{B_0(r)} \Delta t_\gamma  \, dy =  - 2\pi r t_\gamma'(r) = -\frac{4\pi \frac{r^2}{\mu_\gamma^2}}{1+\frac{r^2}{\mu_\gamma^2}}= - 4\pi + O\left(\frac{\mu_\gamma^2}{\rho_\gamma^2}\right) = -4\pi +o\left(\frac{1}{\gamma^2}\right).
\end{split}
\]
In particular we find that
$$
u_\gamma'(r) + \frac{\mathcal{T}_\gamma'(r)}{\gamma} =  {-}\frac{1}{2\pi r } \int_{B_0(r)}\Delta \left(u_\gamma+\frac{\mathcal{T}_\gamma}{\gamma}\right) dy = \frac{2}{r} o\left(\frac{1}{\gamma^2}\right), 
$$
for any $r\in [\rho_\gamma,r_\gamma']$. Applying the fundamental theorem of calculus and using \eqref{AtRhoGamma}, we find  that 
$$
u_\gamma(r)- \left(\gamma-\frac{\mathcal{T}_\gamma(r)}{\gamma}\right) =  u_\gamma(\rho_\gamma) -\left( \gamma - \frac{\mathcal{T}_\gamma(\rho_\gamma)}{\gamma} \right) +  o\left(\frac{1}{\gamma^2}\right)\log \frac{r^2}{\rho_\gamma^2} = o\left(\frac{\mathcal{T}_\gamma(r)}{\gamma^2}\right). 
$$
Then we must have $r_\gamma = r_\gamma'$ for any large $\gamma$ (and in particular \eqref{LapInter} follows from \eqref{Claim}). Since $\eta$ can be arbitrarily  small, we get \eqref{SecondExpansion}.  
\end{proof}

\begin{Step}\label{Thm3StepLimit}
As $\gamma\to +\infty$ we have $\beta_\gamma \to  \beta_\frac{a}{2}>0$ and   $u_\gamma \to u_\frac{a}{2}$  in $C^1_{loc}(\bar \Omega\setminus \{0\})$.
\end{Step}
\begin{proof} Let $c_1$, $\rho_\gamma$ and $r_\gamma$ be as in \eqref{G1},  \eqref{DefRhoGammaG}, and  \eqref{DefRGamma}. Since $u_\gamma\le c_1$ in $B_0(1)\setminus B_0(r_\gamma)$,  \eqref{EqLemmaThm3},  \eqref{BoundBetaGamma}, \eqref{LapUpToRhoGamma} and \eqref{LapInter}  give that $\Delta u_\gamma$ is bounded in $L^1(\Omega)$. Hence, we have that 
$$
u_\gamma (r)= O\left(\log \frac{1}{r}  \right)
$$
for $r\in(0,1]$, so that $u_\gamma$ is locally bounded in $\bar \Omega \setminus \{0\}$. By \eqref{EqLemmaThm3}, \eqref{BoundBetaGamma} and elliptic estimates, up to a subsequence we have that $\beta_\gamma\to \beta_\infty \in  [0,+\infty)$ and   $u_\gamma \to u_\infty$ in $C^1_{loc}(\bar \Omega\setminus \{0\})$, where $u_\infty$ solves 
\begin{equation}\label{EqPunctured}
\begin{cases}
\Delta u_\infty = \beta_\infty u_\infty g(u_\infty),\quad \text{ in }  \Omega\setminus \{0\} \,,\\
u_\infty =0, \quad \text { in }\partial \Omega\,.
\end{cases}
\end{equation}
Note that for $r\in [0,r_\gamma]$, we have $u_\gamma(r)\ge c_1$ and 
\[
u_\gamma (r) = c_1 + \int_{r}^{r_\gamma}  \frac{1}{2\pi s} \int_{B_0(s)} \Delta u_\gamma dy ~{ds}\le c_1 +  \frac{1+o(1)}{\gamma} \log\frac{r_\gamma^2}{r^2}\,,
\]
where the last inequality follows from \eqref{LapUpToRhoGamma} and \eqref{LapInter}. Then necessarily $r_\gamma\to 0$, otherwise we would have $u_\infty\equiv c_1$ in  $B_0(\delta)\setminus \{0\}$, for some $\delta>0$, which contradicts \eqref{EqPunctured}. Then, since $u_\gamma\le c_1$ in $[r_\gamma,1]$ implies  $u_\infty\le c_1$ in $(0,1]$, by \eqref{EqPunctured} and standard elliptic regularity we get that $u_\infty\in C^1(\bar \Omega)$ and that $u_\infty$ solves  
\begin{equation}\label{EqDisk}
\begin{cases}
\Delta u_\infty = \beta_\infty u_\infty g(u_\infty)\quad \text{ in } \Omega\,,\\
u_\infty = 0 \quad \text{ on }\partial \Omega.
\end{cases}
\end{equation}
Now, let us take a sequence $(\sigma_\gamma)_\gamma$ such that $r_\gamma\le \sigma_\gamma\to 0$, 
\begin{equation}\label{Sigma1}
\|u_\gamma-u_\infty\|_{C^1(\bar \Omega \setminus B_0(\sigma_\gamma))}\to 0\,,
\end{equation}
and 
\begin{equation}\label{Sigma2}
\frac{1}{\gamma}\log\frac{1}{\sigma_\gamma^2} =o(1)\,.
\end{equation}  
Applying the fundamental therorem of calculus and using \eqref{LapUpToRhoGamma}, \eqref{LapInter}, \eqref{Sigma1} and \eqref{Sigma2} we get that 
\begin{equation}\label{Exp2}
\begin{split}
u_\gamma (r_\gamma) &=  u_\gamma(\sigma_\gamma) + \int_{r_\gamma}^{\sigma_\gamma} \frac{1}{2\pi r}\int_{B_0(r)} \Delta u_\gamma  dy ~{dr}\,,\\
&= u_\gamma(\sigma_\gamma) +  \int_{r_\gamma}^{\sigma_\gamma} \frac{1}{r} \left( \frac{2+o(1)}{\gamma} + O(r^2 )\right)  dr\,, \\
& = u_\gamma(\sigma_\gamma)+ \frac{1+o(1)}{\gamma} \log\frac{\sigma_\gamma^2}{r_\gamma^2} + O(\sigma_\gamma^2)\,,\\
& = u_\infty(0) + \frac{1+o(1)}{\gamma} \log\left(\frac{1}{r_\gamma^2}\right) + o(1)\,.
\end{split}
\end{equation}
Note that, since $u_\gamma(r_\gamma)=c_1$, one has necessarily that 
\begin{equation}\label{RoughBound}
\frac{1}{\gamma} \log\left(\frac{1}{r_\gamma^2}\right)=O(1)\,.
\end{equation}
By Step \ref{LemmaImprovedExpansion} we can compute $u_\gamma(r_\gamma)$ according to the expansion in \eqref{SecondExpansion} and find 
\begin{equation}\label{Exp1}
\begin{split}
u_\gamma (r_\gamma) & = \gamma - \left(1+\frac{a}{2\gamma} \right) \frac{ t_\gamma(r_\gamma)}{\gamma} +o(1)\,, \\
 &= \gamma - \left(1+\frac{a}{2\gamma} \right)\left(\gamma-a-\frac{1}{\gamma}\log \left(\frac{1}{r_\gamma^2}\right) - \frac{1}{\gamma}\log \left(\frac{1}{\beta_\gamma}\right)+o(1)\right)\,,\\
& = \frac{a}{2}+ \frac{1+o(1)}{\gamma}\log \left(\frac{1}{r_\gamma^2}\right) + \frac{1+o(1)}{\gamma}\log \left(\frac{1}{\beta_\gamma}\right) +o(1)\,.
\end{split}
\end{equation}
Then, comparing \eqref{Exp1} with \eqref{Exp2} and using \eqref{RoughBound}, we find that 
\begin{equation}\label{Almost}
u_\infty(0) = \frac{a}{2} + \frac{1+o(1)}{\gamma}\log\left( \frac{1}{\beta_\gamma}\right) + o(1)\,. 
\end{equation}
Note that one cannot have $\beta_\infty =0$, otherwise \eqref{EqDisk} would imply $u_\infty\equiv 0$ in $B_1(0)$ and in particular $u_\infty(0) = 0$, which contradicts \eqref{Almost}. Then, we have $\beta_\infty >0$, so that \eqref{Almost} implies  $u_\infty(0)=\frac{a}{2}$. The uniqueness result of Lemma \ref{LemmaExistNew} implies that $\beta_\infty=\beta_\frac{a}{2}$ and $u_\infty=u_\frac{a}{2}$. 
\end{proof}

In view of Step \ref{Thm3StepLimit}, in order to prove Theorem \ref{Thm3} it remains to prove that the quantification in \eqref{QuantifGa} holds true. Indeed, at that stage, this also implies that $u_\gamma\to u_\frac{a}{2}$ weakly in $H^1_0(\Omega)$, as claimed in \eqref{WLimThm3}.  By Step \ref{Thm3StepLimit}, we can find $(\sigma_\gamma)_{\gamma}$ such that  \[
\|u_\gamma -u_\frac{a}{2}\|_{C^1(B_0(1)\setminus B_0(\sigma_\gamma))}\to 0 \quad \text{ and } r_\gamma \le \sigma_\gamma\to 0\,,
\] 
as $\gamma\to +\infty$, where $r_\gamma$ is as in \eqref{DefRGamma}.  Then, by \eqref{EqLemmaThm3} we have
\begin{equation}\label{EqLast2}
\begin{split}
\int_{B_0(1)\setminus B_0(\sigma_\gamma)} u_\gamma \Delta u_\gamma dx  & =  \int_{B_0(1)} \beta_\frac{a}{2} u_\frac{a}{2}^2 g(u_\frac{a}{2}) dx + o(1)\,, \\ 
& = \int_{B_0(1)} |\nabla u_\frac{a}{2}|^2 dx + o(1)\,,
\end{split}
\end{equation}
and, since $u_\gamma\le c_1$ in $\Omega\setminus B_0(r_\gamma)$, that
\begin{equation}\label{EqLast3}
\int_{B_0(\sigma_\gamma)\setminus B_{0}(r_\gamma)} u_\gamma \Delta u_\gamma dx = O(\sigma_\gamma^2)\,.
\end{equation}
Finally, \eqref{ExpEnergy1} and \eqref{LapInter} with $u_\gamma\le \gamma$ give
\begin{equation}\label{EqLast4}
\int_{B_0(r_\gamma)}  u_\gamma \Delta u_\gamma dx =  4\pi + o(1)\,. 
\end{equation}
Clearly \eqref{EqLast2}, \eqref{EqLast3} and \eqref{EqLast4} give \eqref{QuantifGa} after an integration by parts. 

\begin{bibdiv}
\begin{biblist}

\bib{AdimurthiDruet}{article}{
      author={Adimurthi},
      author={Druet, O.},
       title={Blow-up analysis in dimension 2 and a sharp form of
  {T}rudinger-{M}oser inequality},
        date={2004},
        ISSN={0360-5302},
     journal={Comm. Partial Differential Equations},
      volume={29},
      number={1-2},
       pages={295\ndash 322},
         url={http://dx.doi.org/10.1081/PDE-120028854},
      review={\MR{2038154}},
}

\bib{AdYadMult}{article}{
      author={Adimurthi},
      author={Yadava, S.~L.},
       title={Multiplicity results for semilinear elliptic equations in a
  bounded domain of {${\bf R}^2$} involving critical exponents},
        date={1990},
        ISSN={0391-173X},
     journal={Ann. Scuola Norm. Sup. Pisa Cl. Sci. (4)},
      volume={17},
      number={4},
       pages={481\ndash 504},
         url={http://www.numdam.org/item?id=ASNSP_1990_4_17_4_481_0},
      review={\MR{1093706}},
}

\bib{AdYadNonex}{article}{
      author={Adimurthi},
      author={Yadava, S.~L.},
       title={Nonexistence of nodal solutions of elliptic equations with
  critical growth in {${\bf R}^2$}},
        date={1992},
        ISSN={0002-9947},
     journal={Trans. Amer. Math. Soc.},
      volume={332},
      number={1},
       pages={449\ndash 458},
         url={https://doi-org.docelec.univ-lyon1.fr/10.2307/2154041},
      review={\MR{1050083}},
}

\bib{CarlesonChang}{article}{
      author={Carleson, Lennart},
      author={Chang, Sun-Yung~A.},
       title={On the existence of an extremal function for an inequality of
  {J}.\ {M}oser},
        date={1986},
        ISSN={0007-4497},
     journal={Bull. Sci. Math. (2)},
      volume={110},
      number={2},
       pages={113\ndash 127},
      review={\MR{878016}},
}

\bib{DelPNewSol}{article}{
      author={del Pino, Manuel},
      author={Musso, Monica},
      author={Ruf, Bernhard},
       title={New solutions for {T}rudinger-{M}oser critical equations in
  {$\Bbb R^2$}},
        date={2010},
        ISSN={0022-1236},
     journal={J. Funct. Anal.},
      volume={258},
      number={2},
       pages={421\ndash 457},
         url={http://dx.doi.org/10.1016/j.jfa.2009.06.018},
      review={\MR{2557943}},
}

\bib{DruetDuke}{article}{
      author={Druet, O.},
       title={Multibumps analysis in dimension 2: quantification of blow-up
  levels},
        date={2006},
        ISSN={0012-7094},
     journal={Duke Math. J.},
      volume={132},
      number={2},
       pages={217\ndash 269},
         url={http://dx.doi.org/10.1215/S0012-7094-06-13222-2},
      review={\MR{2219258}},
}

\bib{DruThiII}{article}{
      author={Druet, Olivier},
      author={Malchiodi, Andrea},
      author={Martinazzi, Luca},
      author={Thizy, Pierre-Damien},
       title={{M}ulti-bumps analysis for {T}rudinger-{M}oser nonlinearities
  {II}-{E}xistence of solutions of high energies},
        date={2018},
        note={In preparation},
}

\bib{DruThiI}{article}{
      author={Druet, Olivier},
      author={Thizy, Pierre-Damien},
       title={{M}ulti-bumps analysis for {T}rudinger-{M}oser nonlinearities
  {I}-{Q}uantification and location of concentration points},
        date={2018},
       pages={64},
        note={To appear in J. Eur. Math. Soc. (arXiv:1710.08811)},
}

\bib{Flucher}{article}{
      author={Flucher, Martin},
       title={Extremal functions for the {T}rudinger-{M}oser inequality in
  {$2$} dimensions},
        date={1992},
        ISSN={0010-2571},
     journal={Comment. Math. Helv.},
      volume={67},
      number={3},
       pages={471\ndash 497},
         url={http://dx.doi.org/10.1007/BF02566514},
      review={\MR{1171306}},
}

\bib{GrossiNaim}{article}{
      author={Grossi, Massimo},
      author={Naimen, Daisuke},
       title={Blow-up analysis for nodal radial solutions in
  {M}oser-{T}rudinger critical equations in $\mathbb{R}^2$},
        date={2017},
        note={Preprint at arXiv:1706.09223v2},
}

\bib{LammRobertStruwe}{article}{
      author={Lamm, Tobias},
      author={Robert, Fr{\'e}d{\'e}ric},
      author={Struwe, Michael},
       title={The heat flow with a critical exponential nonlinearity},
        date={2009},
        ISSN={0022-1236},
     journal={J. Funct. Anal.},
      volume={257},
      number={9},
       pages={2951\ndash 2998},
         url={http://dx.doi.org/10.1016/j.jfa.2009.05.018},
      review={\MR{2559723}},
}

\bib{MalchMartJEMS}{article}{
      author={Malchiodi, Andrea},
      author={Martinazzi, Luca},
       title={Critical points of the {M}oser-{T}rudinger functional on a disk},
        date={2014},
        ISSN={1435-9855},
     journal={J. Eur. Math. Soc. (JEMS)},
      volume={16},
      number={5},
       pages={893\ndash 908},
         url={http://dx.doi.org/10.4171/JEMS/450},
      review={\MR{3210956}},
}

\bib{MartMan}{article}{
      author={Mancini, Gabriele},
      author={Martinazzi, Luca},
       title={The {M}oser-{T}rudinger inequality and its extremals on a disk
  via energy estimates},
        date={2017},
        ISSN={0944-2669},
     journal={Calc. Var. Partial Differential Equations},
      volume={56},
      number={4},
       pages={Art. 94, 26},
         url={http://dx.doi.org/10.1007/s00526-017-1184-y},
      review={\MR{3661018}},
}

\bib{ThiMan}{article}{
      author={Mancini, Gabriele},
      author={Thizy, Pierre-Damien},
       title={Non-existence of extremals for the {A}dimurthi-{D}ruet
  innequality},
        date={2017},
        note={arXiv:1711.05022},
}

\bib{StruweCrit}{article}{
      author={Struwe, Michael},
       title={Critical points of embeddings of {$H^{1,n}_0$} into {O}rlicz
  spaces},
        date={1988},
        ISSN={0294-1449},
     journal={Ann. Inst. H. Poincar\'e Anal. Non Lin\'eaire},
      volume={5},
      number={5},
       pages={425\ndash 464},
         url={http://www.numdam.org/item?id=AIHPC_1988__5_5_425_0},
      review={\MR{970849}},
}

\bib{StruwePlanar}{article}{
      author={Struwe, Michael},
       title={Positive solutions of critical semilinear elliptic equations on
  non-contractible planar domains},
        date={2000},
        ISSN={1435-9855},
     journal={J. Eur. Math. Soc. (JEMS)},
      volume={2},
      number={4},
       pages={329\ndash 388},
         url={http://dx.doi.org/10.1007/s100970000023},
      review={\MR{1796963}},
}

\end{biblist}
\end{bibdiv}

\end{document}